\setlist{leftmargin=9mm}
\newtheorem{theorem}{Theorem}[section]
\newtheorem{lemma}[theorem]{Lemma}
\theoremstyle{definition}
\newtheorem*{conjecture}{Conjecture}
\theoremstyle{remark}
\newtheorem{remark}[theorem]{Remark}
\newtheorem*{remark*}{Remark}
\numberwithin{equation}{section}
\newcommand{\Oo}{\mathcal{O}}
\newcommand{\N}{\mathbb{N}}
\newcommand{\R}{\mathbb{R}}
\newcommand{\C}{\mathbb{C}}
\newcommand{\eps}{\varepsilon}
\newcommand{\dd}{\,{\rm d}}
\numberwithin{equation}{section}
\definecolor{DarkBlue}{rgb}{0,0.1,0.7}  
\definecolor{DarkGreen}{rgb}{0,0.5,0.1}
\newcommand\soutD{\bgroup\markoverwith
{\textcolor{DarkGreen}{\rule[.5ex]{2pt}{1pt}}}\ULon}
\newcommand{\Hm}[1]{\leavevmode{\marginpar{\tiny%
$\hbox to 0mm{\hspace*{-0.5mm}$\leftarrow$\hss}%
\vcenter{\vrule depth 0.1mm height 0.1mm width \the\marginparwidth}%
\hbox to
0mm{\hss$\rightarrow$\hspace*{-0.5mm}}$\\\relax\raggedright #1}}}
\title{An improved discrete Rellich inequality on the half-line}
\author{Borbala Gerhat}
\address[Borbala Gerhat]{Department of Mathematics, Faculty of Nuclear Sciences and Physical Engineering, Czech Technical University in Prague, Trojanova 13, 120 00 Praha, Czech Republic}
\email{borbala.gerhat@fjfi.cvut.cz}
\author{David Krej\v ci\v r\'ik}
\address[David Krej\v ci\v r\'ik]{Department of Mathematics, Faculty of Nuclear Sciences and Physical Engineering, Czech Technical University in Prague, Trojanova 13, 120 00 Praha, Czech Republic}
\email{david.krejcirik@fjfi.cvut.cz}
\author{Franti\v sek \v Stampach}
\address[Franti\v sek \v Stampach]{Department of Mathematics, Faculty of Nuclear Sciences and Physical Engineering, Czech Technical University in Prague, Trojanova 13, 120 00 Praha, Czech Republic}
\email{frantisek.stampach@fjfi.cvut.cz}
\date{19 June 2022}
\begin{document}

\begin{abstract}
 Based on a new idea of factorization, we prove an improved discrete Rellich inequality and discuss its optimality. 
 We also give a conjecture on improved higher order discrete Hardy-like inequalities and formulate an open problem for the discrete Rellich inequality on the full space.
\end{abstract}
	
\maketitle

\section{Introduction}
The classical Hardy inequality asserts that
\begin{equation}\label{Hardy}
  \int_0^\infty |u'(x)|^2 \, \dd x
  \geq \frac{1}{4} \int_0^\infty \frac{|u(x)|^2}{x^2} \, \dd x ,
\end{equation}
for any $u \in H^1(0,\infty)$ with $u(0) = 0$.
Its discrete variant reads
\begin{equation}
 \sum_{n=1}^{\infty}|u_{n-1}-u_{n}|^{2}\geq\frac{1}{4}\sum_{n=1}^{\infty}\frac{|u_{n}|^{2}}{n^{2}},
\label{eq:hardy_ineq_clas}
\end{equation}
for any $u\in\ell^{2}(\N_{0})$ with $u_{0}=0$. 
These inequalities were established by G.~Hardy when trying to find a simple proof of the Hilbert inequality~\cite{Hardy-1920}. Other great mathematicians of the beginning of the 20th century, such as E.~Landau, G.~P{\'o}lya, I.~Schur, and M.~Riesz, also contributed to the development of Hardy's and related inequalities; see~\cite{Kufner-Maligranda-Person-2006} for a nice historical survey.

It is well known that the constant~$1/4$ on the right-hand side of~\eqref{Hardy} 
is the best possible and that the weight $1/(4x^2)$ 
cannot be replaced by a point-wise bigger function.
On the other hand,
while the constant $1/4$ on the right-hand side of~\eqref{eq:hardy_ineq_clas} 
is still the best possible, 
it was shown only recently by M.~Keller, Y.~Pinchover, and F. Pogorzelski 
in~\cite{Keller-Pinchover-Pogorzelski-2018-I} 
(see also \cite{Keller-Pinchover-Pogorzelski-2018-II} and preprint \cite{Fischer-Keller-Pogorzelski-2019-arxiv} for a $\ell^{p}$-generalization)
that the inequality can be improved by replacing the whole weight $1/(4n^{2})$ 
by a point-wise bigger sequence.
The improved discrete Hardy inequality reads
\begin{equation}
 \sum_{n=1}^{\infty}|u_{n-1}-u_{n}|^{2}\geq\sum_{n=1}^{\infty}\rho_{n}^{(1)}|u_{n}|^{2},
\label{eq:hardy_ineq_opt}
\end{equation}
where
\begin{equation}
\rho^{(1)}_{n}:=2-\sqrt{\frac{n-1}{n}}-\sqrt{\frac{n+1}{n}}>\frac{1}{4n^{2}},
\label{eq:hardy_weight_opt}
\end{equation}
for all $n\in\N$ (the reason for the upper index in $\rho^{(1)}$ will become clear later). An elementary proof of~\eqref{eq:hardy_ineq_opt} can be found in~\cite{Krejcirik-Stampach-2022}; see also~\cite{Huang-2021} for another proof.   

Moreover, the authors of~\cite{Keller-Pinchover-Pogorzelski-2018-II} 
proved that the Hardy weight $\rho^{(1)}$ 
cannot be replaced by a point-wise bigger sequence, in fact the weight
exhibits a strong optimality property. 
The notion of optimality has been established for Hardy inequalities in a different context of second order elliptic differential operators in~\cite{Devyver-Fraas-Pinchover-2014} and adjusted to the discrete setting of graphs in~\cite{Keller-Pinchover-Pogorzelski-2018-II}. 
To explain it in the present setting of discrete Hardy inequalities
on the half-line, let us denote 
\[
 H_0^M (\N_{0}):=\{ u \in \ell^2 (\N_0) \mid  u_0 = \dots = u_{M-1} = 0\}, \quad M\in\N.
\]
A positive sequence $\rho$ is called a \emph{Hardy weight}, if~\eqref{eq:hardy_ineq_opt} holds on $H_0^1(\N_{0})$ with $\rho^{(1)}$ being replaced by $\rho$. A Hardy weight $\rho$ is said to be \emph{optimal}, if the following three properties hold:
\begin{enumerate}
\item[1.] (\emph{Criticality}) 
The Hardy weight $\rho$ is \emph{critical}
meaning that
for any Hardy weight~$\tilde{\rho}$ such that $\tilde{\rho}\geq\rho$ point-wise, it follows that $\tilde{\rho}=\rho$.
\item[2.] (\emph{Non-attainability}) 
The Hardy weight $\rho$ is \emph{non-attainable}
meaning that
if the equality in~\eqref{eq:hardy_ineq_opt} is attained for a sequence $u$ such that the right-hand side of~\eqref{eq:hardy_ineq_opt} is finite, i.e.~$\sqrt{\rho}\,u\in\ell^{2}(\N)$, then $u=0$.
\item[3.] (\emph{Optimality near infinity}) 
The Hardy weight $\rho$ is \emph{optimal near infinity}
meaning that
for any $M\in\N$ and $\eps>0$, 
there exists a finitely supported sequence $u = u (M,\eps)\in H_0^M(\N_{0})$ such that
\begin{equation}
	\sum_{n=1}^\infty|u_{n-1}-u_{n}|^{2}< (1+\eps) \sum_{n=1}^\infty \rho_{n} |u_n|^2.
\end{equation}
\end{enumerate}

A simultaneous validity of properties~1 and~2 is called \emph{null-criticality in~\cite{Keller-Pinchover-Pogorzelski-2018-II}.}
The criticality of $\rho$ means that the respective Hardy inequality cannot be further improved. On the other hand, there exist infinitely many critical Hardy weights which are different from $\rho^{(1)}$, see~\cite[Thm.~10 and Cor.~12]{Krejcirik-Laptev-Stampach-2021-arxiv}. 
The non-attainability of $\rho$ means that, in the weighted $\ell^{2}$-space of sequences $u$ satisfying $u_{0}=0$ and $\sqrt{\rho}\,u\in\ell^{2}(\N_0)$, the inequality in~\eqref{eq:hardy_ineq_opt} is actually strict unless $u$ is trivial. Finally, the optimality of $\rho$ near infinity means that the constant $1$ in~\eqref{eq:hardy_ineq_opt} cannot be improved on the subspace
$H_0^M(\N_{0})$ for any $M\in \N$. Equivalently, the optimality of $\rho$ near infinity can be formulated as
\[
 \inf_{u\in H_0^M(\N_{0})\setminus\{0\}}
 \frac{\displaystyle \sum_{n=1}^\infty|u_{n-1}-u_{n}|^{2}}
 {\displaystyle \sum_{n=1}^\infty \rho_{n} |u_n|^2}=1,
\]
for all $M\in\N$.

There is an operator-theoretic interpretation of Hardy inequalities.
In the discrete setting, 
the Dirichlet Laplacian, conventionally denoted by $-\Delta$, 
is the second order difference operator
\begin{equation}
	\quad (-\Delta u)_{n}:= \begin{cases}
	&\hskip-8pt 2u_{0}-u_{1}, \quad\hskip12pt \mbox{ if }\; n=0,\\
	-u_{n-1}+&\hskip-8pt 2u_{n}-u_{n+1},  \quad \mbox{ if }\;  n\in\N,
	\end{cases}
\end{equation}
acting on the space of complex-valued sequences $\{u_n\}$ indexed by $n\in\N_{0}$. 
The discrete Hardy inequality \eqref{eq:hardy_ineq_opt} 
can be viewed as the inequality for the Laplacian $-\Delta\geq\rho^{(1)}$ on $H_0^1(\N_{0})$ understood in the sense of quadratic forms. 
Since $\rho^{(1)}$ is positive, the operator~$-\Delta$ 
is \emph{subcritical} in the language of~\cite{Keller-Pinchover-Pogorzelski-2018-II}.
Notice that the weight $\rho^{(1)}$ can be written as
\begin{equation}
 \rho^{(1)}=\frac{-\Delta g^{(1)}}{g^{(1)}}, \; \mbox{ where }\; g^{(1)}_{n}:=\sqrt{n}.
\label{eq:def_rho_1_g_1}
\end{equation}
This is not a coincidence, which will become clear in the proofs below, 
see also the methods presented in~\cite{Keller-Pinchover-Pogorzelski-2018-II}.

When the Laplacian $-\Delta$ is replaced by the bi-Laplacian $(-\Delta)^{2}$, 
we obtain a higher-order inequality commonly referred to as the \emph{Rellich inequality}. The classical continuous Rellich inequality appeared first in~\cite{Rellich-1956}. Its one-dimensional form on the half-line reads
\begin{equation}
 \int_{0}^{\infty}\left|u''(x)\right|^2\dd x\geq\frac{9}{16}\int_{0}^{\infty}\frac{|u(x)|^{2}}{x^{4}}\dd x,
\label{eq:rellich-ineq-cont}
\end{equation}
where $u\in H^{2}(0,\infty)$ with $u(0)=u'(0)=0$ 
(the form domain of the continuous Dirichlet bi-Laplacian,
whose quadratic form appears on the left-hand side). 
The main goal of this article is to establish a discrete analogue of the Rellich inequality and show that the direct discrete analogue of~\eqref{eq:rellich-ineq-cont} admits an improvement, 
similarly as in the case of the Hardy inequality. The notion of optimality can be defined for Rellich weights analogously as for Hardy weights replacing the subspace $H_0^1(\N_{0})$ by $H_0^2(\N_{0})$ and the Laplacian $-\Delta$ by the bi-Laplacian $(-\Delta)^{2}$. In contrast to the Hardy case, however, we were not able to prove (nor disprove) the criticality of the obtained improved Rellich weight. The remaining properties from the definition of the optimality hold. Our main result is the following theorem.

\begin{theorem}\label{thm:main}
 For all $u\in H_0^2(\N_{0})$, the discrete Rellich inequality
 \begin{equation}
  \sum_{n=1}^{\infty}\left|(-\Delta u)_{n}\right|^{2}\geq\sum_{n=2}^{\infty}\rho^{(2)}_{n}|u_{n}|^{2}
 \label{eq:rellich-ineq-disc-impr}
 \end{equation}
 holds with
 \begin{equation}
  \rho^{(2)}=\frac{(-\Delta)^{2}g^{(2)}}{g^{(2)}}, \; \mbox{ where }\; g^{(2)}_{n}=n^{3/2}. 
 \label{eq:def_rh_2_g_2}
 \end{equation}
 Moreover, the Rellich weight $\rho^{(2)}$ is 
 non-attainable and optimal near infinity.
 \end{theorem}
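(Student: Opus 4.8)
The plan is to exploit the ``supersolution'' structure $\rho^{(2)} = (-\Delta)^2 g^{(2)} / g^{(2)}$ with $g^{(2)}_n = n^{3/2}$ in direct analogy with the Hardy case~\eqref{eq:def_rho_1_g_1}. The fundamental identity to establish is a discrete ground-state transformation (a Birman--Schwinger / factorization formula): for $u \in H_0^2(\N_0)$ with finite support, writing $u_n = g^{(2)}_n v_n$, one should obtain
\begin{equation*}
 \sum_{n=1}^\infty |(-\Delta u)_n|^2 - \sum_{n=2}^\infty \rho^{(2)}_n |u_n|^2 = \sum_{?} w_n\, |(\text{discrete derivative of } v)_?|^2 \ge 0
\end{equation*}
for suitable nonnegative weights $w_n$ expressed through $g^{(2)}$. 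The cleanest route, matching the paper's announced ``new idea of factorization'', is probably to factor the Dirichlet bi-Laplacian quadratic form itself: write $\sum_n |(-\Delta u)_n|^2$ as a sum over a first-difference operator applied to $-\Delta u$, then perform two successive Hardy-type ground-state substitutions — first relative to $g^{(1)}_n = \sqrt n$ (producing the weight $\rho^{(1)}$ and a leftover nonnegative form), then iterate. I would first verify~\eqref{eq:def_rh_2_g_2} gives a \emph{positive} sequence for $n \ge 2$ (an explicit but elementary computation: $(-\Delta)^2 n^{3/2}$ expands into a combination of $(n+k)^{3/2}$, $k=-2,\dots,2$, which is asymptotically $\sim \tfrac{1}{4}\binom{3/2}{2}\cdot\!\cdots$, i.e. $\sim \tfrac{9}{16}n^{-4+3/2}\cdot n^{\cdots}$ — in any case positive and decaying like $n^{-5/2}$, consistent with the $x^{-4}$ continuum weight after the measure change). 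Then~\eqref{eq:rellich-ineq-disc-impr} follows for finitely supported $u$ by the factorization identity, and for general $u \in H_0^2(\N_0)$ by a density/Fatou argument, truncating $u$ and passing to the limit.

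For \textbf{non-attainability}, suppose equality holds in~\eqref{eq:rellich-ineq-disc-impr} for some $u \in H_0^2(\N_0)$ with $\sqrt{\rho^{(2)}}\,u \in \ell^2$. The factorization identity forces the leftover nonnegative form to vanish, which pins down $v_n = u_n / g^{(2)}_n$ to be (eventually) constant, or more precisely forces a discrete ODE whose only solutions are spanned by $g^{(2)}_n = n^{3/2}$ and a second solution. Since $g^{(2)}_n = n^{3/2}$ is not in the weighted space (as $\rho^{(2)}_n (g^{(2)}_n)^2 \sim n^{-5/2}\cdot n^3 = n^{1/2} \notin \ell^1$), and the second linearly independent solution of the recurrence must be checked to also fail square-summability against $\rho^{(2)}$ (or to violate the boundary conditions $u_0 = u_1 = 0$), one concludes $u = 0$. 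I would identify the second solution explicitly — by the discrete analogue of reduction of order it behaves like $g^{(2)}_n \sum_{k} 1/(w_k (g^{(2)}_k)^2)$, which for $g^{(2)}_k \sim k^{3/2}$ and $w_k \sim k$ gives a bounded correction, so the second solution is $\sim n^{-?}$ and genuinely competes; this case analysis is the delicate bookkeeping step.

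For \textbf{optimality near infinity}, fix $M \in \N$ and $\eps > 0$. The standard logarithmic-cutoff construction should work: take $v^{(\eps)}_n$ to be a plateau equal to $1$ on a long middle range and tapering logarithmically (in the appropriate variable) to $0$ at the two ends — supported on $\{M, \dots, N\}$ for large $N$ — and set $u_n = g^{(2)}_n v^{(\eps)}_n$, adjusting the first two entries to meet $u_0 = u_1 = 0$ without affecting the asymptotics. Plugging into the factorization identity, the ``bulk'' contribution reproduces $\sum \rho^{(2)}_n |u_n|^2$ exactly while the leftover form is $\sum w_n |(\text{difference of } v^{(\eps)})|^2$, which by the logarithmic choice is $O\big((\log N)^{-1}\big)\cdot(\text{comparable mass})$, hence $< \eps \sum \rho^{(2)}_n |u_n|^2$ once $N$ is large. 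One must only check the weighted masses diverge (so the ratio of ``error'' to ``main term'' genuinely tends to $0$), which follows from $\rho^{(2)}_n (g^{(2)}_n)^2 \sim n^{1/2}$ summing to $\infty$. The \textbf{main obstacle} I anticipate is not any single step in isolation but getting the precise closed form of the factorization identity — i.e. identifying the exact nonnegative weights $w_n$ and the exact residual form after the double ground-state transformation — cleanly enough that both the non-attainability rigidity argument and the logarithmic-cutoff estimate go through; in the fourth-order discrete setting the algebra of $(-\Delta)^2(g v)$ is considerably heavier than the second-order Hardy computation, and keeping it manifestly a sum of squares (rather than an indefinite-looking expression) is where the announced new factorization idea must do its work.
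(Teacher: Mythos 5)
Your high-level strategy is the right one and matches the paper's: factor $(-\Delta)^2-\rho^{(2)}$ as $R_2^*R_2$ for a remainder operator $R_2$ that annihilates $g^{(2)}$, then get non-attainability from the rigidity $R_2u=0$ and optimality near infinity from a logarithmic cut-off of $g^{(2)}$. But the step you yourself flag as ``the main obstacle'' --- producing the factorization as a manifest sum of squares --- is precisely the content of the paper's proof, and your proposed route to it (``two successive Hardy-type ground-state substitutions'', first relative to $g^{(1)}_n=\sqrt n$, then iterate) does not work: iterating the Hardy transform does not reproduce the weight $\rho^{(2)}=(-\Delta)^2g^{(2)}/g^{(2)}$. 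What the paper actually does is posit a second-order remainder $(R_2u)_n=c_nu_n-b_nu_{n+1}+c_n^{-1}u_{n+2}$ with two unknown positive sequences, impose $R_2g^{(2)}=0$ together with the diagonal and off-diagonal matching conditions, eliminate $b_n$, and arrive at a \emph{nonlinear} first-order recurrence for $\zeta_n:=c_n^2$. The existence of the factorization then hinges on a genuine lemma (Lemma~\ref{lem.xi.bounds}): a two-sided induction showing $\bigl(1+\tfrac2n\bigr)^{3/2}<\zeta_n<\bigl(1+\tfrac3n\bigr)^{3/2}$, which simultaneously guarantees positivity of $c_n^2$ (so the sum-of-squares decomposition exists at all) and gives the quantitative control $c_n=1+\Oo(1/n)$ that drives both the non-attainability and the cut-off estimates. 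One must also check that the overdetermined system (three families of equations for two sequences) is compatible. None of this is supplied or replaceable by the iteration you sketch, so the proposal has a real gap at its centre.

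Two smaller points. First, your asymptotics for the weight are off: $\rho^{(2)}_n\sim \tfrac{9}{16}n^{-4}$ (it is the \emph{numerator} $(-\Delta)^2n^{3/2}$ that decays like $n^{-5/2}$), so $\rho^{(2)}_n(g^{(2)}_n)^2\sim n^{-1}$, not $n^{1/2}$; the sum still diverges, so non-attainability of $g^{(2)}$ survives, but in the optimality-near-infinity step the plateau only gives a denominator bounded below by a constant ($\ge\tfrac14\log 2$ in the paper), not a divergent one, and the argument closes only because the remainder is $\Oo(1/\log N)$. Second, your reduction-of-order guess that the second solution of $R_2u=0$ is $g^{(2)}$ times a bounded correction is wrong: the paper computes it explicitly as $u_n=g^{(2)}_n\sum_{k<n}(g^{(2)}_kg^{(2)}_{k+1})^{-1}\prod_{j<k}c_j^2\gtrsim n^{5/2}$, using the lower bound on $c_j^2$ from the lemma, whence $\rho^{(2)}_n|u_n|^2\gtrsim n$ is manifestly non-summable. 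So again the conclusion is reachable, but only through the quantitative control of $\{c_n\}$ that your proposal leaves unconstructed.
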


\begin{remark}$\ $
Notice that the squares of the sequences $g^{(1)}$ and $g^{(2)}$, i.e.~the sequences $\{n\}$ and $\{n^{3}\}$, are annihilated by $-\Delta$ and $(-\Delta)^{2}$, respectively.
\end{remark}

Explicitly, the improved Rellich weight reads
\[
 \rho^{(2)}_{n}=6-4\left(1+\frac{1}{n}\right)^{3/2}-4\left(1-\frac{1}{n}\right)^{3/2}+\left(1+\frac{2}{n}\right)^{3/2}+\left(1-\frac{2}{n}\right)^{3/2},
\]
for $n\geq 2$. It is straightforward to expand the above expression into a power series which yields
\begin{equation}\label{eq:asymp.rho.2}
\rho^{(2)}_{n}=6\sum_{l=1}^{\infty}\frac{4^{l}-1}{4^{2l}}\frac{(4l)!}{(2l)!(2l+2)!}\frac{1}{n^{2l+2}}=\frac{9}{16n^{4}}+\Oo\left(\frac{1}{n^{6}}\right).
\end{equation}
The coefficients of the power series expansion above are all positive, in particular implying that 
\begin{equation}
\rho^{(2)}_{n}>\frac{9}{16n^{4}},
\end{equation}
for all $n\geq2$. Hence, Theorem~\ref{thm:main} improves upon the direct discrete analogue of~\eqref{eq:rellich-ineq-cont}, i.e.~the inequality 
\begin{equation}
 \sum_{n=1}^{\infty}\left|(-\Delta u)_{n}\right|^{2}\geq\frac{9}{16}\sum_{n=2}^{\infty}\frac{|u_{n}|^{2}}{n^{4}},
\label{eq:rellich-ineq-disc}
\end{equation}
for all $u\in H_0^2(\N_{0})$. In fact, we are not aware of~\eqref{eq:rellich-ineq-disc} 
to be proved in the literature. Only recently, S.~Gupta obtained inequality~\eqref{eq:rellich-ineq-disc} with the constant $8/16$ instead of $9/16$ in the preprint~\cite{Gupta-2021-arxiv}. The more recent developments  of M.~Keller, Y.~Pinchover, and F.~Pogorzelski in~\cite{Keller-Pinchover-Pogorzelski-2021} on Rellich inequalities on graphs do not seem to cover our main result nor~\eqref{eq:rellich-ineq-disc} either, as they introduce additional weights into the inequalities which cannot be removed in the particular case of interest.

The paper is organized as follows. First, in Section~\ref{sec:hardy},
we briefly describe an elementary method of proving the optimal discrete Hardy inequality~\eqref{eq:hardy_ineq_opt}. 
The idea of the proof is implicitly behind the approach of
the second and third authors to establish the inequality,
as well as the criticality of $\rho^{(1)}$, 
in~\cite{Krejcirik-Stampach-2022}.
Our point here is to emphasize the common logic of our approach that admits a generalization to the case of the Rellich inequality. Moreover, we complete the proof of the optimality of~$\rho^{(1)}$ by showing the non-attainability and the optimality of $\rho^{(1)}$ near infinity, which have not been addressed in~\cite{Krejcirik-Stampach-2022}. 

Our method of proof is based on a factorization of $-\Delta - \rho^{(1)}$
and from this respect it resembles a general approach of F.~Gesztesy
and L.~L.~Littlejohn in the continuous setting in~\cite{Gesztesy-Littlejohn_2018}
(see also \cite{Gesztesy-Littlejohn-Michael-Wellman_2018}).
We are not aware of any development of the idea in the discrete setting.
The main novelty of the present paper is a successful factorization 
of the bi-Laplacian operator $(-\Delta)^2 - \rho^{(2)}$.
This is done in Section~\ref{sec:rellich}, 
where we apply the indicated method and prove Theorem~\ref{thm:main}.
In addition, for the criticality of~$\rho^{(2)}$ is not established, 
we prove a related result which shows that an improved Rellich weight cannot be too far from $\rho^{(2)}$ measured by a~suitable metric. 

Finally, in Section~\ref{sec:conj}, 
we provide a conjecture on discrete higher order Hardy-like inequalities for $(-\Delta)^{k}$, with $k\geq3$, and discuss open problems related to the discrete Rellich inequality. 
The paper is concluded by two appendices, where several auxiliary lemmas are proven.

\section{The Hardy inequality}\label{sec:hardy}

We motivate the Rellich inequality based on the Hardy case. 
We follow the method of proof of~\eqref{eq:hardy_ineq_opt}
from~\cite{Krejcirik-Stampach-2022}, which enables one to 
even determine the remainder term in the discrete Hardy inequality explicitly.

\subsection{Proof of the Hardy inequality}

It is sufficient to prove~\eqref{eq:hardy_ineq_opt} for compactly supported sequences in $H_0^1(\N_{0})$, the full statement then follows by a standard approximation argument. The method proceeds by making the ansatz that the operator $-\Delta-\rho^{(1)}$, which has a tri-diagonal matrix representation on $H_0^1(\N_{0})$ (with respect to the standard basis) decomposes as $R_{1}^{*}R_{1}$, where $R_{1}$ is determined by a real bi-diagonal matrix. It means that we suppose
\begin{equation}\label{eq.Hardy.ansatz}
	\sum_{n=1}^\infty |u_{n-1}-u_{n}|^2 = \sum_{n=1}^\infty \rho_{n}^{(1)} |u_n|^2 + \sum_{n=1}^\infty |(R_1 u)_n|^2,
\end{equation}
where
\begin{equation}\label{eq.R1}
	(R_1 u)_n := a_n u_n - \frac{1}{a_n} u_{n+1}, \quad n \in \N,
\end{equation}
and $\{a_n\}_{n \in \N}\subset\R\setminus\{0\}$ is an unknown sequence we seek.
Note that the coefficients of the remainder are necessarily of this reciprocal form due to the particular structure of the ansatz. If such $\{a_n\}$ exists, the Hardy inequality holds and the remainder is explicit in terms of the sequence $\{a_n\}$.

%\begin{remark}
%	Upon identification of $\Dd_1$ with $\ell^2(\N)$, inequality~\eqref{eq:hardy_ineq_opt} reads
%	\begin{equation}\label{eq.Hardy.form}
%		\langle -\Delta u, u \rangle = \| Du \|^2  \ge \| \sqrt{\rho^{(1)}} u\|^2 = \langle \rho^{(1)} u, u \rangle, \quad u \in \ell^2 (\N),
%	\end{equation}
%	with the norm and scalar product understood in $\ell^2 (\N)$, i.e.~$-\Delta \ge \rho^{(1)}$ on $\ell^2 (\N)$ in the form sense. Imposing~\eqref{eq.Hardy.ansatz} means assuming that one can factorise
%	\begin{equation}
%		-\Delta - \rho_1 = R_1^*R_1.
%	\end{equation}
%	The remainder in~\eqref{eq.Hardy} is then \footnote{I put $\ast$ to indicate the end of remarks. Better suggestion?}
%	\begin{equation}
%		\|Du \|^2 - \|\sqrt{\rho_1} u\|^2 = \langle R_1^* R_1 u, u \rangle = \| R_1 u\|^2 \ge 0. \tag*{$\ast$}
%	\end{equation}
%\end{remark}

Expanding~\eqref{eq.Hardy.ansatz}--\eqref{eq.R1} with the weight in~\eqref{eq:hardy_weight_opt} leads to the set of equations
\begin{equation}\label{eq.Hardy.set1}
	a_1^2 = \sqrt 2, \quad a_n^2 + \frac1{a_{n-1}^2} = \sqrt\frac{n+1}{n} + \sqrt\frac{n-1}{n}, \quad n \ge 2.
\end{equation}
As a first order difference equation, it can be solved explicitly and has the unique positive solution
\begin{equation}
	a_n = \sqrt[4]{\frac{n+1}{n}}, \quad n \in \N.
\end{equation}
Thus, identity~\eqref{eq.Hardy.ansatz} holds true with the remainder term determined by the operator
\begin{equation}
	(R_1u)_{n}=\sqrt[4]{\frac{n+1}{n}} u_n - \sqrt[4]{\frac{n}{n+1}} u_{n+1}, \quad n\in\N,
	\label{eq:R_1_explicitly}
\end{equation}
and the discrete Hardy inequality~\eqref{eq:hardy_ineq_opt} follows.

\begin{remark}\label{rem:R_1_anihil_g_1}
 Notice that $R_{1}$ annihilates the sequence $g^{(1)}_{n}=\sqrt{n}$, see~\eqref{eq:def_rho_1_g_1}, i.e. $R_{1}g^{(1)}=0$. 
\end{remark}

\begin{remark}
Alternatively, identity~\eqref{eq.Hardy.ansatz} can be written as the operator equality
\[
 -\Delta-\rho^{(1)}=R_{1}^{*}R_{1}
\]
on the subspace $H_0^1(\N_{0})$ of $\ell^{2}(\N_{0})$. It implies the Hardy inequality, which can be written as the operator inequality $-\Delta\geq\rho^{(1)}$ on $H_0^1(\N_{0})$ understood in the sense of quadratic forms.
\end{remark}

\begin{remark}
In fact, any positive sequence $\{g_{n}\}_{n \in \N}$ together with $g_{0} = 0$ such that $\rho:=(-\Delta g)/g$ becomes point-wise positive results in a Hardy inequality with the remainder in~\eqref{eq.R1} and $a_n = \sqrt{g_{n+1} / g_{n}}$, see~\cite[Thm.~10]{Krejcirik-Laptev-Stampach-2021-arxiv}. The authors therein also give a sufficient condition for the criticality of this more general Hardy weight.
\end{remark}

\subsection{Optimality of the Hardy inequality}

Next, we prove the optimality of the Hardy weight~\eqref{eq:hardy_weight_opt}, which was originally shown in a more general graph setting in~\cite{Keller-Pinchover-Pogorzelski-2018-II}. We present another proof, making use of the concrete remainder determined by~\eqref{eq:R_1_explicitly} and suitable regularizing sequences whose choice is motivated by standard proofs of the continuous Hardy inequality. Recall that the notion of optimality comprises criticality, non-attainability and optimality of $\rho^{(1)}$ near infinity.

\subsubsection{Criticality}

To demonstrate the criticality of $\rho^{(1)}$, we need to check that the discrete Hardy inequality cannot hold with any strictly larger weight. Suppose $\rho$ is a Hardy weight such that $\rho_{n}\geq\rho^{(1)}_{n}$ for all $n\in\N$.
Then, by~\eqref{eq.Hardy.ansatz},
\begin{equation}
 0\leq\sum_{n=1}^{\infty}(\rho_{n}-\rho^{(1)}_{n})|u_{n}|^{2}\leq \sum_{n=1}^\infty \left|(R_{1}u)_{n} \right|^2,
\label{eq:towards_optimality}
\end{equation}
for all compactly supported sequences $u$. Observing that the right-hand side of~\eqref{eq:towards_optimality} vanishes for $u=g^{(1)}$, which, however, is not a finitely supported sequence, the idea is to conveniently regularize $g^{(1)}$. A possible regularization introduces $u_n^N := \xi_n^N \sqrt{n}$, where
\[
  \xi_n^N :=
  \begin{cases}
    1 & \mbox{if} \quad n < N \,,
    \\[2pt]
    \frac{\displaystyle 2 \log{N}-\log n}{\displaystyle\log N}
    & \mbox{if} \quad N\leq n \leq N^2 \,,
    \\[2pt]
    0 & \mbox{if} \quad n > N^2 \,,
  \end{cases}
\]
with $N\geq 2$. Notice that $\xi^N\to1$ point-wise as $N \to \infty$. Further, with the aid of formula~\eqref{eq:R_1_explicitly}, we may plug $u^{N}$ into~\eqref{eq:towards_optimality} and deduce the upper bound
\begin{align*}
\sum_{n=1}^\infty \left|(R_{1}u^N)_{n} \right|^2&=\sum_{n=1}^\infty\sqrt{n(n+1)}\left|\xi_{n+1}^{N}-\xi_{n}^{N}\right|^{2}\\
&=\frac{1}{\log^{2}N}\sum_{n=N+1}^{N^{2}}\sqrt{n(n-1)}\log^{2}\left(\frac{n}{n-1}\right)\\
&\leq\frac{2}{\log^{2}N}\sum_{n=N+1}^{N^{2}}\frac{1}{n-1}\leq\frac{4}{\log N}
\end{align*}
for its right-hand side. As the last expression tends to zero, as $N\to\infty$, by monotone convergence we arrive at the equality 
\[
\sum_{n=1}^{\infty}n(\rho_{n}-\rho^{(1)}_{n})=0.
\]
Since each term in the last series is non-negative, we conclude  $\rho=\rho^{(1)}$.

\subsubsection{Non-attainability}

One observes from~\eqref{eq.Hardy.ansatz} that the discrete Hardy inequality holds as an equality for a sequence $u$ with $u_{0}=0$, if and only if, $R_{1}u=0$. Taking~\eqref{eq:R_1_explicitly} into account, it is clear that the solution of $R_{1}u=0$ equals $g^{(1)}$ up to a multiplicative constant, i.e. $u_{n}=c\sqrt{n}$ for $c\in\C$ and $n\in\N_{0}$. Since $\rho_{n}^{(1)}>1/(4n^{2})$, for all $n\in\N$, one sees that 
\[
 \sum_{n=1}^{\infty}\rho_{n}^{(1)}\left|u_{n}\right|^{2}=\infty,
\]
whenever $c\neq0$. 
This result together with the already established criticality implies
that $\rho^{(1)}$ is actually null-critical.

\subsubsection{Optimality near infinity}

As it follows from the definition and the identity~\eqref{eq.Hardy.ansatz}, it suffices, for $M\in\N$ fixed, to find a sequence of elements $u^{N}\in H_0^M(\N_{0})\setminus\{0\}$ such that 
\[
 \lim_{N\to\infty}
 \frac{\displaystyle\sum_{n=1}^{\infty}\left|R_{1}u^{N}_{n}\right|^{2}}
 {\displaystyle\sum_{n=1}^{\infty}\rho_{n}^{(1)}\left|u^{N}_{n}\right|^{2}}=0.
\]
We can take take a slight modification of the sequence in the proof of the criticality of $\rho^{(1)}$. More precisely, we consider~$\tilde u^{N}:=\tilde \xi^{N}g^{(1)}$, for $N \ge M$, with
\[
\tilde \xi_n^N :=
\begin{cases}
	0 & \mbox{if} \quad n < N \,,
	\\[2pt]
	\frac{\displaystyle  \log{n}-\log N}{\displaystyle\log N} & \mbox{if} \quad N\leq n \leq N^2 \,, \\[2pt]
	1 & \mbox{if} \quad N^2< n < 2N^2 \,,
	\\[2pt]
	\frac{\displaystyle \log{(2N^3)}-\log n}{\displaystyle\log N}
	& \mbox{if} \quad 2N^2 \leq n \leq 2N^3 \,,
	\\[2pt]
	0 & \mbox{if} \quad n > 2N^3 \,.
\end{cases}
\]
While the proof of  $\|R_{2}\tilde u^{N}\|^{2}=\Oo(1/\log N)$ is analogous as in the proof of the criticality, one uses $\rho^{(1)}_n > 1/(4n^2)$, for all $n \in \N$, to bound the denominator from below by
\begin{equation}
	\sum_{n=1}^{\infty}\rho_{n}^{(1)}\left|u^{N}_{n}\right|^{2}>\sum_{n=N^2}^{2N^2}\rho_{n}^{(1)}\left|u^{N}_{n}\right|^{2}=\sum_{n=N^2}^{2N^2}\rho_{n}^{(1)}n>\frac{1}{4}\sum_{n=N^2}^{2N^2}\frac{1}{n}>\frac{1}{4}\log 2,
\end{equation}
for all $N\geq2$. The result readily follows.

\section{The Rellich inequality}\label{sec:rellich}

The goal is to generalize the approach from the previous section to deduce a lower bound for the square of $-\Delta$ on the subspace $H_0^2(\N_{0}) = \left\{ u \in \ell^2 (\N_0) \, : \, u_0 = u_1 = 0 \right\}$ and prove the Rellich inequality of Theorem~\ref{thm:main}.

\subsection{Proof of the Rellich inequality}

Similarly as in the Hardy case, we seek a remainder term so that the identity 
\begin{equation}\label{eq.Rellich.ansatz}
	 \sum_{n=1}^{\infty} |(-\Delta u)_n|^2 = \sum_{n=2}^{\infty} \rho_{n}^{(2)} |u_n|^2 + \sum_{n=1}^{\infty} |(R_2 u)_n|^2
\end{equation}
holds for all $u\in H_0^2(\N_{0})$, where $\rho^{(2)}$ is defined by~\eqref{eq:def_rh_2_g_2}.  We suppose that $R_2$ can be found as a second order difference operator. More precisely, the ansatz is that $R_{2}$ acts on sequences by the formula
\begin{equation}\label{eq.R2}
	(R_2 u)_n := c_n u_n - b_n u_{n+1} + \frac{1}{c_n} u_{n+2}, \quad n \in \N,
\end{equation}
where $\{b_n\}_{n\in\N}\subset\R$ and $\{c_n\}_{n\in\N}\subset\R\setminus\{0\}$. Note that starting with a more general ansatz of a second order difference operator $R_{2}$ with three unknown coefficients results in this special form with the first and the third coefficients reciprocal, reducing the number of unknown sequences from three to two. Below, we prove that there indeed exist sequences $\{b_{n}\}$ and $\{c_{n}\}$ with positive entries such that~\eqref{eq.Rellich.ansatz} and~\eqref{eq.R2} hold. 
Unlike in the Hardy case, however, we do not obtain a fully explicit formula for the remainder due to the second order differences in the underlying system of equations. 

\begin{remark}
	The claimed Rellich inequality is in fact a lower bound $(-\Delta)^2 \ge \rho^{(2)}$ for the bi-Laplacian on the subspace $H_0^2(\N_{0})$ in the form sense. Analogously to the Hardy case, our ansatz assumes the factorization
	\begin{equation}
		(-\Delta)^2 - \rho^{(2)} = R_2^* R_2
	\end{equation}
	on $H_0^2(\N_{0})$, which leads to the Rellich inequality
	\begin{equation}
		\| -\Delta u \|^2 -  \left\|\sqrt{\rho^{(2)}} u \right\|^2 = \|R_{2}u\|^2 \ge 0.
	\end{equation}
\end{remark}

The proof of the existence of point-wise positive sequences $\{b_{n}\}$ and $\{c_{n}\}$ such that~\eqref{eq.Rellich.ansatz} and~\eqref{eq.R2} hold relies on a lemma for an auxiliary sequence defined recursively by
\begin{equation}\label{eq:def_xi}
	\zeta_{n} := \frac{g_{n+1}^{(2)}}{g_{n}^{(2)}} \left( 4 - \frac{g_{n+2}^{(2)}}{g_{n+1}^{(2)}} - \frac{g_{n-1}^{(2)}}{g_{n}^{(2)}} - \frac{g_{n+1}^{(2)}}{g_{n}^{(2)}} \frac{1}{\zeta_{n-1}} \right), \quad n \geq 2,
\end{equation}
and the initial condition 
\begin{equation}\label{eq:def_xi1}
	\zeta_{1} := \frac{4g_{2}^{(2)} - g_{3}^{(2)}}{g_{1}^{(2)}},
\end{equation}
where $g^{(2)}_{n}=n^{3/2}$, see~\eqref{eq:def_rh_2_g_2}. It turns out that the sequence $\zeta$ is well defined, i.e. $\zeta_{n}\neq0$ for all $n\in\N$. In fact, we will need more detailed upper and lower bounds on $\zeta_{n}$ for later purposes, which is the content of the following lemma. As its proof is unrelated to the presented method for the derivation of the Rellich inequality, we postpone it to Appendix~\ref{subsec:a.1} 
in order not to distract from our primary intention.

\begin{lemma}\label{lem.xi.bounds}
	For all $n\in\N$, the solution of~\eqref{eq:def_xi} and~\eqref{eq:def_xi1} satisfies
	\begin{equation}
		\left(1 + \frac{2}{n}\right)^{3/2} < \zeta_{n} < \left(1 + \frac{3}{n}\right)^{3/2}.
	\end{equation}
	In particular, $\zeta_{n}>0$ for all $n\in\N$.
\end{lemma}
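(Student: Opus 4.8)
The plan is to prove the two-sided bound by induction on $n$, using the explicit form $g^{(2)}_n = n^{3/2}$ to rewrite the recursion~\eqref{eq:def_xi} as
\[
 \zeta_n = \left(\frac{n+1}{n}\right)^{3/2}\left( 4 - \left(\tfrac{n+2}{n+1}\right)^{3/2} - \left(\tfrac{n-1}{n}\right)^{3/2} - \left(\tfrac{n+1}{n}\right)^{3/2}\frac{1}{\zeta_{n-1}} \right), \quad n \ge 2,
\]
and to verify that the map $t \mapsto \left(\tfrac{n+1}{n}\right)^{3/2}\bigl( 4 - (\tfrac{n+2}{n+1})^{3/2} - (\tfrac{n-1}{n})^{3/2} - (\tfrac{n+1}{n})^{3/2}/t \bigr)$ is monotone increasing in $t>0$. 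Hence, if $\zeta_{n-1}$ lies in the interval $\bigl((1+\tfrac{2}{n-1})^{3/2},(1+\tfrac{3}{n-1})^{3/2}\bigr)$, it suffices to check the two scalar inequalities obtained by substituting the two endpoints: plugging in $\zeta_{n-1} = (1+\tfrac{2}{n-1})^{3/2} = (\tfrac{n+1}{n-1})^{3/2}$ should yield the upper bound $\zeta_n < (1+\tfrac{3}{n})^{3/2} = (\tfrac{n+3}{n})^{3/2}$, and plugging in $\zeta_{n-1} = (1+\tfrac{3}{n-1})^{3/2} = (\tfrac{n+2}{n-1})^{3/2}$ should yield the lower bound $\zeta_n > (\tfrac{n+2}{n})^{3/2}$. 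The base case $n=1$ is a direct computation from~\eqref{eq:def_xi1}: $\zeta_1 = 4\cdot 2^{3/2} - 3^{3/2} = 8\sqrt2 - 3\sqrt3 \approx 6.11$, which must be checked to lie strictly between $3^{3/2} \approx 5.196$ and $4^{3/2} = 8$.

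The key steps, in order, are: (i) rewrite~\eqref{eq:def_xi}--\eqref{eq:def_xi1} in terms of ratios of consecutive integers raised to the power $3/2$; (ii) observe the monotonicity in $\zeta_{n-1}$ so that the induction reduces to two explicit one-variable inequalities at the interval endpoints; (iii) verify the base case $n=1$ numerically/exactly; (iv) establish the two endpoint inequalities for all $n\ge 2$ (for $n=2$ one must be careful that $\tfrac{n-1}{n}=\tfrac12$ so no singularity arises, and the induction hypothesis at $n-1=1$ uses the already-verified base case). For step~(iv) I would clear denominators and reduce each inequality to showing that a certain combination of terms of the form $(1\pm\tfrac{k}{n})^{3/2}$ has a definite sign; this is most cleanly handled by expanding in powers of $1/n$ via the binomial series — as is already done in the paper for $\rho^{(2)}$ in~\eqref{eq:asymp.rho.2} — and controlling the tail, or alternatively by substituting $x = 1/n \in (0,1]$ and analyzing the resulting algebraic function of $x$ (e.g.\ by checking it is monotone, or by bounding its Taylor remainder). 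Since the bounds are strict and the gap between the guessed envelope $(1+\tfrac{2}{n})^{3/2}$ and the true behavior of $\zeta_n$ (which, from the consistency of the ansatz, should satisfy $\zeta_n \approx (1+\tfrac{2}{n})^{3/2} + O(1/n^2)$) is of lower order, one must ensure the chosen envelopes $(1+\tfrac 2n)^{3/2}$ and $(1+\tfrac 3n)^{3/2}$ are slack enough to absorb the error terms uniformly in $n$; this is the reason for the somewhat generous upper envelope.

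The main obstacle I anticipate is step~(iv): the endpoint inequalities, once denominators are cleared, become inequalities between sums of several fractional powers with no obvious algebraic factorization, so proving their sign for \emph{all} $n$ requires either a careful binomial-tail estimate (showing the leading negative/positive coefficient dominates, after checking a handful of small $n$ by hand) or a calculus argument in the variable $x=1/n$ on $(0,1]$. A secondary subtlety is bookkeeping near $n=2$, where $(\tfrac{n-1}{n})^{3/2}=(\tfrac12)^{3/2}$ and the induction feeds on the hand-checked value $\zeta_1$ rather than on a generic endpoint; one should treat $n=2$ (and possibly $n=3$) as additional explicit base cases if the uniform estimate in~(iv) only kicks in for $n$ large. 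Everything else — the monotonicity observation and the reduction of the induction step to two scalar inequalities — is routine.
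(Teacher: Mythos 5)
Your overall strategy coincides with the paper's: induction on $n$, monotonicity of the recursion in $\zeta_{n-1}$, reduction of the induction step to two scalar endpoint inequalities, and the base case $\zeta_1=8\sqrt2-3\sqrt3\approx 6.1$ checked against $3\sqrt3\approx 5.2$ and $8$. However, as written your induction step is logically reversed. Writing $h_n:=(1+1/n)^{3/2}$, $L_n:=(1+2/n)^{3/2}=h_nh_{n+1}$ and $U_n:=(1+3/n)^{3/2}=h_nh_{n+1}h_{n+2}$, the recursion is $\zeta_n=F(\zeta_{n-1})$ with $F(t)=h_n\,(4-h_{n+1}-1/h_{n-1})-h_n^2/t$, which you correctly note is increasing in $t>0$. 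Hence $\zeta_{n-1}>L_{n-1}$ yields $\zeta_n>F(L_{n-1})$ and $\zeta_{n-1}<U_{n-1}$ yields $\zeta_n<F(U_{n-1})$: the \emph{lower} endpoint must be substituted to propagate the lower bound and the \emph{upper} endpoint to propagate the upper bound. You pair them the other way around, and the inequalities you would then verify, namely $F(L_{n-1})<U_n$ and $F(U_{n-1})>L_n$, are true but vacuous, since $F(L_{n-1})<\zeta_n<F(U_{n-1})$; the induction does not close. The fix is a one-line swap, but as stated the step fails.

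The substantive gap is your step (iv), which you flag as the main obstacle but do not resolve; the paper's route here is more elementary than binomial-tail control. The telescoping form of the envelopes is the key: substituting $\zeta_{n-1}=L_{n-1}=h_{n-1}h_n$ collapses $h_n/\zeta_{n-1}$ to $1/h_{n-1}$, and the lower-bound step reduces to the single inequality $\bigl(\frac{n+2}{n+1}\bigr)^{3/2}+\bigl(\frac{n-1}{n}\bigr)^{3/2}<2$ for $n\ge2$. This is proved by showing that the left-hand side, as a function of a real variable $x$, is strictly increasing on $(2,\infty)$ with limit $2$ at infinity; the sign of its derivative reduces to the polynomial inequality $(x-1)(x+1)^5>(x+2)x^5$, which factors as $(2x+1)(x^2+x+\sqrt2-1)(x^2+x-\sqrt2-1)>0$. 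The upper-bound step, after clearing denominators, becomes a polynomial inequality in $g_n=n^{3/2}$ that follows from the strict convexity of $x\mapsto x^{3/2}$ (namely $2g_n<g_{n-1}+g_{n+1}$ and $2g_{n+1}<g_{n-1}+g_{n+3}$) together with $g_{n+1}^2>g_ng_{n+2}$. In particular, no series expansion, no uniform control of Taylor remainders and no extra base cases at $n=2,3$ are needed; the argument is uniform for all $n\ge2$. Your plan is salvageable along these lines, but the two endpoint inequalities are the actual content of the lemma and are not established in the proposal.
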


Now we can prove the Rellich inequality~\eqref{eq:rellich-ineq-disc-impr}. It suffices to prove it for finitely supported sequences $u \in H_0^2(\N_{0})$. The full claim then follows from a standard approximation argument.
	
By construction,~\eqref{eq:rellich-ineq-disc-impr} follows from the existence of positive sequences $\{b_n\}$ and $\{c_n\}$ such that the remainder ansatz~\eqref{eq.Rellich.ansatz}--\eqref{eq.R2} holds. To show that such sequences exist, we first expand
\begin{align*}
	|(-\Delta u)_{n}|^{2}&= |u_{n-1} - 2u_n + u_{n+1}|^2\\
			&= |u_{n-1}|^2 + 4  |u_n|^2 +  |u_{n+1}|^2 -4 \Re (u_{n-1} \bar u_n +  u_n \bar u_{n+1} ) +2 \Re (u_{n-1} \bar u_{n+1}).
\end{align*}
	Using that $u_0 = u_1 = 0$, we see that
	\begin{equation} \label{eq.Delta.u}
		\| -\Delta u \|^2 =  6 \sum_{n=2}^{\infty} |u_n|^2 - 8 \Re \sum_{n=2}^{\infty} u_n \bar u_{n+1} + 2 \Re \sum_{n=2}^{\infty} u_n \bar u_{n+2}.
	\end{equation}
	On the other hand,
	\begin{equation} 
		\begin{aligned}
			|c_n u_n - b_n u_{n+1} + \frac{1}{c_n} u_{n+2}|^2 = c_n^2 & |u_n|^2 + b_n^2 |u_{n+1}|^2 + \frac{1}{c_n^2} |u_{n+2}|^2 \\
			& - 2 \Re \left( c_n b_n u_n \bar u_{n+1} + \frac{b_n}{c_n} u_{n+1} \bar u_{n+2} - u_n \bar u_{n+2} \right).
		\end{aligned}
	\end{equation}
	Since $u_0 = u_1 = 0$, this gives
	\begin{equation}\label{eq.R2.u}
		\begin{aligned}
			\| R_2u \|^2 = \sum_{n=3}^{\infty} & \left( c_n^2  + b_{n-1}^2 + \frac{1}{c_{n-2}^2} \right) |u_n|^2 + (c_2^2 + b_1^2) |u_2|^2 \\
			& - 2 \Re \sum_{n=2}^{\infty} \left( c_n b_n + \frac{b_{n-1}}{c_{n-1}} \right) u_n \bar u_{n+1} + 2 \Re \sum_{n=2}^{\infty} u_n \bar u_{n+2}.
		\end{aligned}
	\end{equation}
	After inserting~\eqref{eq.Delta.u} and~\eqref{eq.R2.u} into~\eqref{eq.Rellich.ansatz}, we arrive at the following two countable sets of equations
	\begin{equation}\label{eq.set1}
			6  = \rho_{2}^{(2)} + c_2^2 + b_1^2, \quad 
			6  = \rho_{n}^{(2)} + c_n^2 + b_{n-1}^2 + \frac{1}{c_{n-2}^2}, \quad n \ge 3, 
	\end{equation}
	and secondly
	\begin{equation}\label{eq.set2}
		4  = c_n b_n + \frac{b_{n-1}}{c_{n-1}}, \quad n \ge 2.
	\end{equation}
	
Next, we impose the additional condition that $g^{(2)}$ is annihilated by the remainder operator $R_2$, i.e.~$R_{2}g^{(2)}=0$; cf.~Remark~\ref{rem:R_1_anihil_g_1}. It means that we assume the third set of equations
	\begin{equation}\label{eq.set3}
		c_{n}g_{n}^{(2)}-b_{n}g_{n+1}^{(2)}+\frac{1}{c_{n}}g_{n+2}^{(2)}=0, \quad n\in\N.
	\end{equation}
The task is to find sequences $\{b_n\}$ and $\{c_n\}$ which solve the equations~\eqref{eq.set1},~\eqref{eq.set2}, and~\eqref{eq.set3} simultaneously.
	
	If we reduce $b_{n}$ from~\eqref{eq.set3} and use it in~\eqref{eq.set2}, we arrive at the following recurrence 
	\begin{equation}\label{eq.c.recur}
		\frac{g_{n}^{(2)}}{g_{n+1}^{(2)}}c_{n}^{2}+\frac{g_{n+1}^{(2)}}{g_{n}^{(2)}}\frac{1}{c_{n-1}^{2}}=4-\frac{g_{n+2}^{(2)}}{g_{n+1}^{(2)}}-\frac{g_{n-1}^{(2)}}{g_{n}^{(2)}}, \quad n\geq2.
	\end{equation}
	When fixing an initial value, this determines the sequence $\{c_{n}^{2}\}$ uniquely, provided that $c_{n}^{2}$ never vanishes. We choose
	\begin{equation}\label{eq.c1}
		c_1^2 := \frac{4g_{2}^{(2)}-g_{3}^{(2)}}{g_{1}^{(2)}}
	\end{equation}
	and apply Lemma~\ref{lem.xi.bounds} with $\zeta_n := c_n^2$ to conclude that the resulting sequence is indeed positive. In the sequel, $\{c_n\}$ shall be the sequence of positive square roots and $\{b_n\}$ shall be computed from~\eqref{eq.set3}. Then~\eqref{eq.set2} and~\eqref{eq.set3} hold by construction.
	
	Moreover, these solutions are compatible with~\eqref{eq.set1}. In fact, for $n \ge 3$, if one computes $b_{n-1}$ from~\eqref{eq.set3}, expresses $c_{n-2}^{2}$ and $c_{n}^{2}$ in terms of $c_{n-1}^{2}$ employing~\eqref{eq.c.recur} and uses
	\begin{equation}\label{eq.rho2.frac}
		\rho_{n}^{(2)} = \frac{g_{n-2}^{(2)} - 4 g_{n-1}^{(2)} + 6 g_{n}^{(2)} - 4 g_{n+1}^{(2)} + g_{n+2}^{(2)}}{g_{n}^{(2)}}, \quad n \ge 2,
	\end{equation}
	as in~\eqref{eq:def_rh_2_g_2}, one readily checks that the second equation in~\eqref{eq.set1} holds identically. The first equation in~\eqref{eq.set1} can be verified similarly by expressing $b_1$ from~\eqref{eq.set3}, $c_2^2$ from~\eqref{eq.c.recur} and using the definition of $c_1^2$ in~\eqref{eq.c1} and $\rho_{2}^{(2)}$ in~\eqref{eq.rho2.frac}.
	
	In summary, we have proved that the required sequences exist and hence identity~\eqref{eq.Rellich.ansatz} is established.

\subsection{On the optimality of $\rho^{(2)}$}

In this section, we prove the second claim of Theorem~\ref{thm:main} by verifying that the Rellich weight $\rho^{(2)}$ fulfills the non-attainability, 
as well as the optimality near infinity. Even though the remainder of our Rellich inequality is not fully explicit, the asymptotic information from Lemma~\ref{lem.xi.bounds} about the sequence $\{c_n^2\}$ proves to be sufficient.

Since the criticality of $\rho^{(2)}$ is not established, it is possible that the Rellich weight $\rho^{(2)}$ can be further improved. To this end, we prove another result which provides certain limits to a possible improvement. More precisely, we show that any Rellich weight, which improves upon $\rho^{(2)}$, lies in a ball centered at $\rho^{(2)}$ of a specific radius with respect to a suitable weighted $\ell^{1}$-metric.

\subsubsection{Non-attainability}

We prove that, if a sequence $u$ with $u_{0}=u_{1}=0$ satisfies
\[
 \sum_{n=1}^{\infty} |(-\Delta u)_n|^2 = \sum_{n=2}^{\infty} \rho_{n}^{(2)} |u_n|^2<\infty,
\]
then $u=0$. Taking~\eqref{eq.Rellich.ansatz} into account, the equality in the Rellich inequality implies that $R_{2}u=0$, which by~\eqref{eq.R2} is
\[
 c_{n}^{2}u_{n}+b_{n}c_{n}u_{n+1}+u_{n+2}=0, \quad n \in \N.
\]
Due to the requirement $u_{1}=0$, a general solution of this recurrence is determined uniquely up to a multiplicative constant. Using~\eqref{eq.set3}, we can reduce $b_{n}$ from the last equation and get the recurrence
	\[
	c_{n}^{2}\left(u_{n}-\frac{g_{n}^{(2)}}{g_{n+1}^{(2)}}u_{n+1}\right)-\frac{g_{n+2}^{(2)}}{g_{n+1}^{(2)}}\left(u_{n+1}-\frac{g_{n+1}^{(2)}}{g_{n+2}^{(2)}}u_{n+2}\right)=0, \quad n \in \N.
	\]
 This is a first-order difference equation for the unknown sequence
	\[
	v_{n}:=u_{n}-\frac{g_{n}^{(2)}}{g_{n+1}^{(2)}}u_{n+1}, \quad n \in \N.
	\]
	We can easily solve it by iteration and, if we additionally set $u_{2}=1$, as the solution is determined up to a multiplicative constant, we arrive at the formula 
	\[
	v_{n}=-\frac{g_{1}^{(2)}}{g_{n+1}^{(2)}}\prod_{j=1}^{n-1}c_{j}^{2}, \quad n \in \N.
	\]
	Solving the resulting first-order recurrence for $u$ yields the solution
	\begin{equation}
	u_{n}=g_{1}^{(2)}g_{n}^{(2)}\sum_{k=1}^{n-1}\frac{1}{g_{k}^{(2)}g_{k+1}^{(2)}}\prod_{j=1}^{k-1}c_{j}^{2}, \quad n \in \N.
	\label{eq:second_sol_R_2}
	\end{equation}
Thus, any solution of $R_{2}u=0$ with $u_0 = u_1 = 0$ is a constant multiple of~\eqref{eq:second_sol_R_2}.
	
	Notice that the solution~\eqref{eq:second_sol_R_2} is positive. Using the lower bound on $\zeta_{n}\equiv c_{n}^{2}$ from Lemma~\ref{lem.xi.bounds} and inserting $g_{n}^{(2)}=n^{3/2}$, we get the estimate
	\[
	u_{n}\geq n^{3/2} \sum_{k=1}^{n-1}\frac{1}{k^{3/2}(k+1)^{3/2}}\prod_{j=1}^{k-1}\left(1+\frac{2}{j}\right)^{3/2}=\frac{n^{3/2}(n-1)}{2^{3/2}}, \quad n\in\N.
	\] 
	From this it follows that $u_{n}\gtrsim n^{5/2}$, for all $n\ge 2$, where $\gtrsim$ is the inequality $\geq$ up to a universal multiplicative constant. Recalling that $\rho^{(2)}_{n}>9/(16n^{4})\gtrsim 1/n^{4}$, for all $n\geq2$, we see that $\rho_{n}^{(2)}|u_{n}|^{2}\gtrsim n$, hence it is not a summable sequence.

\subsubsection{Optimality near infinity}

We prove that, for any $M\geq2$, one has  
\begin{equation}\label{eq.opt3}
	\inf_{u \in H_0^M(\N_{0}) \setminus \{0\}} \frac{\langle (-\Delta)^2 u, u \rangle}{\langle \rho^{(2)}u,u\rangle}=1.
\end{equation}
Let $M\geq2$ be fixed. In view of~\eqref{eq.Rellich.ansatz}, we establish~\eqref{eq.opt3} by finding a sequence $\{u^{N}\}_{N\ge M}\subset H_0^M(\N_{0})$ such that 
\[
 \lim_{N\to\infty}\frac{\left\|R_{2}u^{N}\right\|^{2}}{\langle \rho^{(2)}u^N,u^N\rangle}=0.
\]
To define the sequence of elements $u^{N}$, we implement a suitable cut-off strategy for the sequence $g^{(2)}$, which is annihilated by $R_2$ per construction. With this choice, we show in two steps that 
\begin{equation}
\left\|R_{2}u^{N}\right\|^{2}\lesssim\frac{1}{\log N} \quad\mbox{ and }\quad \langle \rho^{(2)}u^N,u^N\rangle\gtrsim1,
\label{eq:remainde_estim}
\end{equation}
from which the statement readily follows.

1) \emph{The first inequality in~\eqref{eq:remainde_estim}}: We define $u^N := g^{(2)} \xi^N$ with the cut-off sequence 
\begin{equation}
	\xi_n^N := f^N(n), \quad n \in \N_0,
\end{equation}
where $f_n : [0,\infty) \to [0,1]$ is defined as
\begin{equation}
	f^N (x) := \begin{cases}
		0 & \mbox{if} \quad x < N \,,
		\\[2pt]
		\eta \left( \frac{\log x - \log N}{\log N} \right) & \mbox{if} \quad x \in [N, N^2) \,, \\[2pt]
		1 & \mbox{if} \quad x \in [N^2, 2N^2] \,, \\[2pt]
		\eta \left( \frac{\log (2N^3) - \log x}{\log N} \right) & \mbox{if} \quad x \in (2N^2, 2N^3] \,, \\[2pt]
		0 & \mbox{if} \quad x > 2N^3\,.
	\end{cases}
\end{equation}
Here $\eta$ is a smooth, real-valued, increasing function on $\R$ such that $\eta \equiv 0$ on $(-\infty, \eps)$ and $\eta \equiv 1$ on $(1-\eps, \infty)$ with a fixed $\eps > 0$. By this definition, $f^N$ is a smooth, compactly supported function. Clearly, $u^{N}\in H_0^M(\N_{0})$ for all $N\ge M$.

We proceed by using that $\{b_n\}$ and $\{c_n\}$ are constructed to satisfy~\eqref{eq.set3} and write the entries of the remainder term as
\begin{equation}
	\begin{aligned}
		(R_2 u^N)_n & = c_n g^{(2)}_n \xi^N_n - b_n g^{(2)}_{n+1} \xi^N_{n+1} + \frac{1}{c_n} g^{(2)}_{n+2} \xi^N_{n+2} \\
		& = c_n g^{(2)}_n \left(\xi^N_n - \xi^N_{n+1}\right) - \frac{1}{c_n} g^{(2)}_{n+2} \left(\xi^N_{n+1} - \xi^N_{n+2}\right).
	\end{aligned}
\end{equation}
From Lemma~\ref{lem.xi.bounds}, we know that
\begin{equation}
	1 < c_n = \sqrt \zeta_n < \left( 1 + \frac3n \right)^{3/4} = 1 + \mathcal O \left( \frac1n \right), \quad n \to \infty.
\end{equation}
Hence, one can write $c_n = 1 + p_n$, and thus $1/c_n = 1 + q_n$, where both $p_n$ and $q_n$ are of order $\mathcal O (1/n)$. Then the remainder can be written as
\begin{equation}\label{RuN}
	\begin{aligned}
		(R_2 u^N)_n & = - (g^{(2)}_{n+2} - g^{(2)}_n) \left(\xi^N_n - \xi^N_{n+1}\right) - g^{(2)}_{n+2} \left( - \xi^N_{n} + 2\xi^N_{n+1} - \xi_{n+2}^N \right) \\
		& \hskip67pt  + p_n g^{(2)}_n \left(\xi^N_n - \xi^N_{n+1}\right) - q_n g^{(2)}_{n+2} \left(\xi^N_{n+1} - \xi^N_{n+2}\right).
	\end{aligned}
\end{equation}
Using that
\begin{equation}
	g^{(2)}_{n+2} - g^{(2)}_n \le \frac32 \sqrt{n+2} \lesssim \sqrt{n}, \quad p_n \lesssim \frac1n, \quad q_n \lesssim \frac1n,
\end{equation}
it follows that
\begin{equation}\label{eq.RuN.asymp}
	\begin{aligned}
		(R_2 u^N)_n^2 & \lesssim n \left( \left(\xi^N_n - \xi^N_{n+1}\right)^2 + \left(\xi^N_{n+1} - \xi^N_{n+2}\right)^2 \right) \\
		& \qquad \qquad \qquad \qquad + n^3 \left( - \xi^N_{n} + 2\xi^N_{n+1} - \xi_{n+2}^N \right)^2.
	\end{aligned}
\end{equation}
To get from~\eqref{RuN} to~\eqref{eq.RuN.asymp}, we have also used that the square of a sum can be bounded by the sum of the squares by iterating the inequality $(a+b)^2 \le 2 (a^2 + b^2)$ for non-negative numbers $a,b$. To estimate the differences above, we derive 
\begin{equation}
	(f^N)' (x) = \begin{cases}
		\eta' \left( \frac{\log x - \log N}{\log N} \right) \frac{1}{x \log N} &  \mbox{if} \quad x \in [N, N^2] \,, \\[2pt]
		- \eta' \left( \frac{\log (2N^3) - \log x}{\log N} \right) \frac{1}{x \log N} &  \mbox{if} \quad x \in [2N^2, 2N^3]\,,
	\end{cases}
\end{equation}
and
\begin{equation}
	(f^N)'' (x) = \begin{cases}
		- \eta' \left( \frac{\log x - \log N}{\log N} \right) \frac{1}{x^2 \log N}  \\[2pt]
		\quad \qquad + \eta'' \left( \frac{\log x - \log N}{\log N} \right) \frac{1}{x^2 \log^2 N} & \mbox{if} \quad x \in [N, N^2] \,, \\[2pt]
		\eta' \left( \frac{\log (2N^3) - \log x}{\log N} \right) \frac{1}{x^2 \log N} \\[2pt]
		\quad \qquad + \eta'' \left( \frac{ \log (2N^3) - \log x}{\log N} \right) \frac{1}{x^2 \log^2 N} & \mbox{if} \quad x \in [2N^2, 2N^3]
		\,.
	\end{cases}
\end{equation}
This implies that
\begin{equation}\label{dfN.bdd}
	|(f^N)' (x)| \lesssim \frac{1}{x \log N} \quad\mbox{ and }\quad |(f^N)'' (x)| \lesssim \frac{1}{x^2 \log N},
\end{equation}
for all $x \in \R_+$ and $N \in \N$ (while for most $x$ the above are actually zero). Hence, for all $n, N \in \N$ we have
\begin{equation}\label{diff.est}
	|\xi^N_n - \xi^N_{n+1}| \lesssim \frac{1}{n \log N} \quad\mbox{ and }\quad |- \xi^N_{n} + 2\xi^N_{n+1} - \xi_{n+2}^N| \lesssim \frac{1}{n^2 \log N},
\end{equation}
which can be seen by the mean value theorem. We combine~\eqref{eq.RuN.asymp} with~\eqref{diff.est} to obtain
\begin{equation} \label{eq.RuN.bound}
	(R_2 u^N)_n^2 \lesssim \frac{1}{n \log^2 N}, \quad n, N \in \N.
\end{equation}
We use this estimate and that $f^N$ vanishes on $(0,N]$ and $[2N^3,\infty)$ to conclude
\begin{equation}
	\begin{aligned}
		\left\|R_{2}u^{N}\right\|^{2} & = \sum_{n=N-1}^{N^2-1} (R_2 u^N)_n^2 + \sum_{n=2N^2-1}^{2N^3-1} (R_2 u^N)_n^2\\
		& \lesssim \frac{1}{\log^2 N} \left( \sum_{n=N-1}^{N^2-1} \frac1n + \sum_{n=2N^2-1}^{2N^3-1} \frac1n \right) \\
		& \le \frac{1}{\log^2 N} \left( \int_{N-2}^{N^2-1} \frac1n {\rm d} n + \int_{2N^2-2}^{2N^3-1} \frac1n {\rm d} n \right) \\
		& = \frac{1}{\log^2 N} \log \left( \frac{(N^2-1) (2N^3 - 1)}{(N-2) (2N^2-2)} \right) 
		 \lesssim \frac{1}{\log N}.
	\end{aligned}
\end{equation}
Thus, the first inequality in~\eqref{eq:remainde_estim} is established.

2) \emph{The second inequality in~\eqref{eq:remainde_estim}}:
We use only terms where the cut-off function $\xi^N$ is one by definition (hence $u^N_n = g^{(2)}_{n}=n^{3/2}$ for such $n$) and estimate
	\begin{equation}\label{eq.low.bdd}
		\langle \rho^{(2)}u^N,u^N\rangle = \sum_{n=2}^\infty \rho_{n}^{(2)}|u^N_n|^2 \ge \sum_{n=N^2}^{2N^2} \rho_{n}^{(2)}n^3.
	\end{equation}
	Since the weight satisfies $\rho_n^{(2)}>9/(16n^4)$, for $n\ge 2$, we further obtain
	\begin{equation}
		\| \sqrt {\rho_2} u^N\|^2 \gtrsim \sum_{n=N^2}^{2N^2} \frac1n \ge \log 2.
	\end{equation}
	This, however, implies the second inequality in~\eqref{eq:remainde_estim}.

\subsubsection{An upper bound on a distance of an improved Rellich weight from $\rho^{(2)}$}

Recall that a positive sequence $\{\rho_n\}_{n\geq2}$ is called a Rellich weight, if and only if, the Rellich inequality~\eqref{eq:rellich-ineq-disc-impr} holds on $H_0^2(\N_{0})$ with $\rho^{(2)}$ replaced by $\rho$. We show that, if $\rho$ is a Rellich weight which is point-wise greater or equal to~$\rho^{(2)}$, then the distance of $\rho$ and $\rho^{(2)}$ in a~suitable metric is bounded from above by an explicit constant.

\begin{theorem}
 Let $\rho=\{\rho_{n}\}_{n\geq2}$ be a Rellich weight. If $\rho\geq\rho^{(2)}$, then 
 \[
  \sum_{n=2}^{\infty}n^{3}\!\left(\rho_{n}-\rho_{n}^{(2)}\right)\leq8\sqrt{2}-3\sqrt{3}.
 \]
\end{theorem}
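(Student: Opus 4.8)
The plan is to run the criticality-type argument for the Hardy weight from Section~\ref{sec:hardy}, exploiting that $R_{2}$ annihilates $g^{(2)}$. The essential new feature is that $g^{(2)}\notin H_0^2(\N_{0})$, because $g^{(2)}_{1}=1\neq0$; hence a cut-off of $g^{(2)}$ that is admissible in~\eqref{eq.Rellich.ansatz} necessarily leaves a boundary defect at $n=1$ which cannot be eliminated, and the size of this defect is exactly what yields the constant $8\sqrt2-3\sqrt3$. Concretely, I would first combine the factorization identity~\eqref{eq.Rellich.ansatz} with the hypothesis that $\rho$ is a Rellich weight satisfying $\rho\ge\rho^{(2)}$ to get
\[
 0\le\sum_{n=2}^{\infty}\bigl(\rho_{n}-\rho^{(2)}_{n}\bigr)|u_{n}|^{2}\le\|R_{2}u\|^{2}
\]
for every finitely supported $u\in H_0^2(\N_{0})$. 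It then suffices to construct finitely supported $u^{N}\in H_0^2(\N_{0})$ with $u^{N}_{n}\to g^{(2)}_{n}$ for each fixed $n$ and $\|R_{2}u^{N}\|^{2}\to8\sqrt2-3\sqrt3$; the left-hand side above then converges, by Fatou's lemma applied to its non-negative summands, to $\sum_{n\ge2}n^{3}(\rho_{n}-\rho^{(2)}_{n})$, which gives the claim.

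For the regularization I would take $u^{N}:=\chi^{N}g^{(2)}$, where $\chi^{N}$ vanishes at $n=0,1$, equals $1$ for $2\le n\le N$, decreases to $0$ on $[N,N^{2}]$ through a single smooth logarithmic layer built from the profile $\eta$ used in the optimality-near-infinity proof, and vanishes for $n\ge N^{2}$; then $u^{N}$ is finitely supported, lies in $H_0^2(\N_{0})$, and $\chi^{N}_{n}\to1$ pointwise. Since $R_{2}g^{(2)}=0$ by~\eqref{eq.set3}, the entry $(R_{2}u^{N})_{n}$ vanishes wherever $\chi^{N}$ is locally constant equal to $1$, i.e.\ for $2\le n\le N-2$, and also for $n\ge N^{2}$; only the boundary term at $n=1$ and the terms inside the layer $N\lesssim n\lesssim N^{2}$ remain. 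Inserting $\chi^{N}_{1}=0$ and $\chi^{N}_{2}=\chi^{N}_{3}=1$ into~\eqref{eq.R2} and using~\eqref{eq.set3} at $n=1$ gives $(R_{2}u^{N})_{1}=-c_{1}g^{(2)}_{1}=-c_{1}$, so that $|(R_{2}u^{N})_{1}|^{2}=c_{1}^{2}=\bigl(4g^{(2)}_{2}-g^{(2)}_{3}\bigr)/g^{(2)}_{1}=8\sqrt2-3\sqrt3$ by~\eqref{eq.c1} and $g^{(2)}_{n}=n^{3/2}$; for the layer terms, where $\chi^{N}$ coincides with the smooth ramp profile, the same computation as in~\eqref{RuN}--\eqref{eq.RuN.bound} yields $|(R_{2}u^{N})_{n}|^{2}\lesssim(n\log^{2}N)^{-1}$, so that summing over $n$ from about $N$ to $N^{2}$ contributes $\lesssim1/\log N\to0$. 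Hence $\|R_{2}u^{N}\|^{2}=8\sqrt2-3\sqrt3+\Oo(1/\log N)$, completing the argument.

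Most of this is routine given the earlier subsections; the point requiring care is the cut-off. At the bottom it is \emph{forced} to vanish at $n=1$ --- which is precisely why the constant $8\sqrt2-3\sqrt3$ appears and cannot be lowered by this method as long as criticality remains open --- while on the upper layer it must be genuinely smooth, with first and second differences of orders $1/(n\log N)$ and $1/(n^{2}\log N)$ respectively; a piecewise-log-linear cut-off of the Hardy type would, through the term $n^{3}(-\xi^{N}_{n}+2\xi^{N}_{n+1}-\xi^{N}_{n+2})^{2}$ in~\eqref{eq.RuN.asymp}, produce a contribution of order $N/\log^{2}N$ at the kinks, which does not vanish. I expect this to be the only genuine obstacle, and it is already resolved by the construction used for optimality near infinity.
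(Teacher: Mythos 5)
Your proposal is correct and follows essentially the same route as the paper: regularize $g^{(2)}$ by a smooth logarithmic cut-off forced to vanish at $n=1$, identify the unavoidable boundary defect $|(R_{2}u^{N})_{1}|^{2}=c_{1}^{2}=8\sqrt{2}-3\sqrt{3}$ via~\eqref{eq.set3} and~\eqref{eq.c1}, show the upper transition layer contributes only $\Oo(1/\log N)$, and pass to the limit (the paper uses monotone convergence where you use Fatou, and places its layer on $(2N^{2},2N^{3}]$ rather than $[N,N^{2}]$ — immaterial differences). Your closing observation about why a piecewise-log-linear cut-off fails for the second-order remainder is also accurate and consistent with the paper's choice of the smooth profile $\eta$.
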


\begin{proof}
 We show the claim by a slight modification of the proof of the first inequality in~\eqref{eq:remainde_estim}. Making use of the modified cut-off sequence
 \begin{equation}
	\tilde{f}^N (x) := \begin{cases}
		0 & \mbox{if} \quad x \in [0, 1] \,, \\[2pt]
		1 & \mbox{if} \quad x \in (1, 2N^2] \,, \\[2pt]
		\eta \left( \frac{\log (2N^3) - \log x}{\log N} \right) & \mbox{if} \quad x \in (2N^2, 2N^3] \,, \\[2pt]
		0 & \mbox{if} \quad x > 2N^3 \,,
	\end{cases}
\end{equation}
one obtains that
\[
 \|R_{2}\tilde{u}^{N}\|^{2}=c_{1}^{2}+\Oo \left(\frac{1}{\log N}\right), \quad N\to\infty,
\]
for $\tilde{u}^{N}_n:=\tilde{f}^{N}(n)g^{(2)}_n$. The non-vanishing term $c_{1}^{2}$ arises due to the equality 
\[
\left(R_{2}\tilde{u}^{N}\right)_{1}=-c_{1}g_{1}^{(2)}\tilde{f}^{N}(2)-\frac{1}{c_{1}}g_{3}^{(2)}\left(\tilde{f}^{N}(2)-\tilde{f}^{N}(3)\right)=-c_{1},
\]
for all $N\geq2$, while the decaying term is estimated analogously as before. Since $u^{N}\in H_0^2(\N_{0})$, the Rellich inequality with the weight $\rho$ together with identity~\eqref{eq.Rellich.ansatz} imply
\[
 \sum_{n=2}^{\infty}\left(\rho_{n}-\rho^{(2)}_{n}\right)\left|u_{n}^{N}\right|^{2}\leq\|R_{2}\tilde{u}^{N}\|^{2}=c_{1}^{2}+\mathcal O\left(\frac{1}{\log N}\right), \quad N\to\infty.
\]
Taking into account that $u^{N}_{n}\to g_{n}^{(2)}=n^{3/2}$, as $N\to\infty$, for all $n\geq2$, we obtain
\[
\sum_{n=2}^{\infty}n^{3}\!\left(\rho_{n}-\rho_{n}^{(2)}\right)\leq c_{1}^{2}
\]
by monotone convergence. Recalling~\eqref{eq.c1}, we arrive at the statement.
\end{proof}

\begin{remark}
The last theorem implies that the leading term $9/(16n^{4})$ from the asymptotic expansion of the Rellich weight $\rho_{n}^{(2)}$, for $n\to\infty$, is asymptotically optimal.
\end{remark}

\section{A conjecture on the higher order inequalities}\label{sec:conj}

We observe common patterns in our method for the derivation of the improved discrete Hardy and Rellich inequalities. A similar approach could be applicable  to deduce lower bounds for higher integer powers $(-\Delta)^k$ of the discrete Dirichlet Laplacian with $k\geq 3$ on the space $H_0^k(\N_{0})$.
However, the naturally generalized ansatz on the remainder operator $R_{k}$ suggests $R_{k}$ to be a difference operator of order $k$ depending on $k$ unknown coefficient sequences. When compared to $(-\Delta)^k-\rho^{(k)}$, for the weight sequence
\begin{equation}\label{eq.rho.k}
	\rho^{(k)}:= \frac{(-\Delta)^k g^{(k)}}{g^{(k)}}, \quad g^{(k)}_{n} = n^{k-1/2}, \quad n \in \N_0,
\end{equation}
the resulting constraints on the unknown sequences form a rather complicated system of non-linear higher order difference equations, 
whose analysis constitutes a challenging open problem.
Nevertheless, we conjecture that the resulting inequalities hold true.

\begin{conjecture}
 For all $k\in\N$ and $u\in H_0^k(\N_{0})$, we have the inequality
 \begin{equation}
  \sum_{n=k}^{\infty}((-\Delta)^k u)_{n}\overline{u}_{n}\geq\sum_{n=k}^{\infty}\rho^{(k)}_{n}|u_{n}|^{2},
 \label{eq:ineq_higher_order_conj}
 \end{equation}
 where $\rho^{(k)}$ is defined by~\eqref{eq.rho.k}.
\end{conjecture}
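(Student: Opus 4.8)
The natural plan is to push the factorization strategy of Sections~\ref{sec:hardy} and~\ref{sec:rellich} one order at a time. After the usual density reduction to finitely supported $u\in H_0^k(\N_{0})$, the conjectured inequality~\eqref{eq:ineq_higher_order_conj} is equivalent to the operator identity $(-\Delta)^k-\rho^{(k)}=R_k^*R_k$ on $H_0^k(\N_{0})$, where one seeks $R_k$ as a forward difference operator of order $k$,
\begin{equation*}
 (R_k u)_n=\sum_{j=0}^{k}(-1)^j a_n^{(j)}u_{n+j},\qquad n\in\N,
\end{equation*}
whose outermost coefficients are reciprocal, $a_n^{(0)}a_n^{(k)}=1$, so that $k$ real coefficient sequences remain free (this recovers~\eqref{eq.R1} for $k=1$ and~\eqref{eq.R2} for $k=2$). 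Equivalently, and perhaps more transparently, one first performs the ground-state substitution $u_n=g^{(k)}_nv_n$: since $\rho^{(k)}=(-\Delta)^kg^{(k)}/g^{(k)}$ and $(g^{(k)})^2=\{n^{2k-1}\}$ lies in the kernel of $(-\Delta)^k$ (the pattern noted in the Remark following Theorem~\ref{thm:main}), the left-hand side minus the right-hand side of~\eqref{eq:ineq_higher_order_conj} becomes a $(2k+1)$-banded Hermitian quadratic form in $v$ vanishing on the constant sequence, and the conjecture is the \emph{non-negativity of this banded form}, i.e.\ the existence of its Cholesky-type factorization, whose triangular factor is precisely $R_k$ read in the $v$-coordinates.

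Concretely, one expands the ansatz and matches the coefficients of $u_n\bar u_{n+j}$ for $j=0,1,\dots,k$: the $j=k$ family holds identically by the reciprocity $a_n^{(0)}a_n^{(k)}=1$, just as the $u_n\bar u_{n+2}$ terms cancelled in~\eqref{eq.Delta.u}--\eqref{eq.R2.u}, which leaves $k$ families of nonlinear difference equations for the $k$ free sequences, supplemented by the initial data forced by $u_0=\dots=u_{k-1}=0$. As in the Rellich case one imposes the additional normalization $R_kg^{(k)}=0$ — natural for a polynomial supersolution, cf.\ Remark~\ref{rem:R_1_anihil_g_1} — which expresses one coefficient through the others at each $n$ and collapses the families $j=1,\dots,k-1$ into a closed recurrence for the remaining $k-1$ sequences (for $k=1$ this system is empty and $R_1$ is fully explicit; for $k=2$ it is the scalar recurrence~\eqref{eq.c.recur} for $c_n$). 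One would then establish, in analogy with Lemma~\ref{lem.xi.bounds}, that this recurrence admits a \emph{positive} solution — so that the square roots involved are meaningful and the factorization genuine — with two-sided bounds of the same shape $(1+a/n)^{k-1/2}<\cdots<(1+b/n)^{k-1/2}$ for explicit constants $a,b$. Finally, writing $\rho^{(k)}_n=((-\Delta)^kg^{(k)})_n/g^{(k)}_n$ as the fixed binomial combination of $g^{(k)}_{n-k},\dots,g^{(k)}_{n+k}$, one checks that the $j=0$ family then holds automatically, exactly as the first equation in~\eqref{eq.set1} was recovered from~\eqref{eq.set2}, \eqref{eq.set3} and~\eqref{eq.rho2.frac}; this closes the argument.

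The hard part is the penultimate step. For $k\ge3$ the reduced system is no longer a scalar recurrence but a \emph{coupled} nonlinear system of difference equations, of order at most $k-1$, for the $k-1\ge2$ unknown sequences, and controlling simultaneously its positivity and its precise asymptotics — the analogue of Lemma~\ref{lem.xi.bounds}, which already for $k=2$ required a dedicated appendix — seems to lie beyond the present techniques. A sensible intermediate goal would be to settle $k=3$ by hand, where the two coupled sequences might still be tractable via a monotonicity or invariant-region argument, and to supply numerical evidence for the conjectured bounds in general; an induction on $k$ feeding the order-$(k-1)$ factorization into the order-$k$ one is tempting, but the weights $\rho^{(k)}$ do not factor through the weights $\rho^{(k-1)}$, so it does not obviously get off the ground.
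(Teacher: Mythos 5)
This statement is the Conjecture of Section~\ref{sec:conj}: the paper itself offers no proof of it and explicitly records that the analysis of the resulting system of nonlinear difference equations ``constitutes a challenging open problem.'' Your proposal reproduces, in somewhat more detail, exactly the strategy the authors outline there and carry out for $k=1,2$ — a $k$-th order remainder operator $R_k$ with reciprocal outermost coefficients, the normalization $R_k g^{(k)}=0$, and the reduction to a coupled nonlinear system for the remaining free coefficient sequences — and you are candid that the decisive step is not supplied. That step is a genuine gap, not a technicality: for $k\ge 3$ one must prove that the coupled system admits a globally defined \emph{positive} solution with controlled asymptotics (the analogue of Lemma~\ref{lem.xi.bounds}, which already for $k=2$ occupies a dedicated appendix and relies on ad hoc convexity and polynomial-factorization tricks). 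Without that, the operator $R_k$ does not exist and the identity $(-\Delta)^k-\rho^{(k)}=R_k^*R_k$ remains an ansatz, so the inequality is not established.

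Beyond the main missing lemma, several intermediate assertions in your outline are stated but not verified for general $k$: that the reciprocity $a_n^{(0)}a_n^{(k)}=1$ makes the $u_n\bar u_{n+k}$ family of equations hold identically; that imposing $R_k g^{(k)}=0$ genuinely collapses the families $j=1,\dots,k-1$ into a closed recurrence of the claimed order; and, most importantly, that the diagonal ($j=0$) family ``then holds automatically.'' For $k=2$ this last consistency is a concrete computation combining~\eqref{eq.c.recur} with~\eqref{eq.rho2.frac}, and nothing in your argument shows it persists for higher $k$ — it is exactly the kind of overdetermined compatibility that could fail and would falsify the whole factorization scheme. In short, what you have is a well-aligned research plan that matches the paper's own sketch, not a proof; the conjecture remains open.
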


Let us take a closer look at the weight 
\[
 \rho_{n}^{(k)}=\frac{\left((-\Delta)^k g^{(k)}\right)_{n}}{g^{(k)}_{n}}=\sum_{j=-k}^{k}(-1)^{j}\binom{2k}{k-j}\frac{g_{n+j}^{(k)}}{g_{n}^{(k)}}.
\]
With the aid of the expansion 
\[
\frac{g_{n+j}^{(k)}}{g_{n}^{(k)}}=\left(1+\frac{j}{n}\right)^{k-1/2}=\sum_{l=0}^{\infty}\binom{k-1/2}{l}\frac{j^{l}}{n^{l}},
\]
for $|j|<n$, a straightforward calculation yields
\begin{equation}
\label{eq.rho.k.asymp}
\rho_{n}^{(k)}=\sum_{l=k}^{\infty}\binom{k-1/2}{2l}\left[\sum_{j=-k}^{k}(-1)^{j}\binom{2k}{k+j}j^{2l}\right]\frac{1}{n^{2l}},
\end{equation}
for $n\geq k\ge1$. By extracting the very first term and simplifying, one obtains the asymptotic formula
\[
 \rho_{n}^{(k)}=\frac{\left((2k)!\right)^{2}}{16^{k}\left(k!\right)^{2}}\frac{1}{n^{2k}}+\mathcal O\!\left(\frac{1}{n^{2k+2}}\right), \quad n\to\infty.
\]
The leading term actually resembles the weight in the continuous higher order Hardy-like inequality for an integer power of the Dirichlet Laplacian on $\R_{+}$, 
which reads 
\begin{equation}
 \int_{0}^{\infty}|u^{(k)}(x)|^2\dd x\geq \frac{\left((2k)!\right)^{2}}{16^{k}\left(k!\right)^{2}} \int_{0}^{\infty}\frac{|u(x)|^{2}}{x^{2k}}\dd x,
\label{eq:higher-order-hardy-cont}
\end{equation}
for $u\in H^{k}(0,\infty)$ with $u(0)=\dots=u^{(k-1)}(0)=0$. In addition, the constant on the right-hand side in~\eqref{eq:higher-order-hardy-cont} is known to be the best possible. It seems that inequality~\eqref{eq:higher-order-hardy-cont} was first established by  M.~{\v S}.~Birman \cite{Birman-1961} with a sketch of the proof. I.~M.~Glazman gives the details of the proof in~\cite[pp.~83--84]{Glazman-1965}. Another proof of~\eqref{eq:higher-order-hardy-cont} was found by M.~P.~Owen in~\cite{Owen-1999}, see also \cite{Gesztesy-Littlejohn-Michael-Wellman_2018}.

We claim that all coefficients in~\eqref{eq.rho.k.asymp} are positive. It is easy to check that
\[
 (-1)^{k}\binom{k-1/2}{2l}>0, \quad \forall l\geq k.
\]
Thus, the coefficients in~\eqref{eq.rho.k.asymp} are positive if and only if
\[
 \sum_{j=-k}^{k}(-1)^{j+k}\frac{j^{2l}}{(k+j)!(k-j)!}>0, \quad \forall l\geq k, 
\]
which is by no means obvious. In fact, in the following lemma, 
we prove an identity from which the positivity readily follows and whose appearance within the context of the Hardy-like inequalities is surprising to the authors. A proof of the identity is worked out in Appendix~\ref{subsec:a.2}.

\begin{lemma}\label{lem:id_amaz}
For all $k,s\in\N_{0}$, one has
\[
\sum_{j=-k}^{k}(-1)^{j+k}\frac{j^{2k+2s}}{(k+j)!(k-j)!}=\sum_{j_1=1}^k \sum_{j_2=1}^{j_1} \cdots \sum_{j_s=1}^{j_{s-1}} (j_1j_2 \cdots j_s)^2.
\]
\end{lemma}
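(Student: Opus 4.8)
The plan is to prove the identity in Lemma~\ref{lem:id_amaz} by induction on $s$, reducing the case $s$ to the case $s-1$ via a combinatorial manipulation of the alternating sum on the left-hand side. Denote
\[
 S_s(k) := \sum_{j=-k}^{k}(-1)^{j+k}\frac{j^{2k+2s}}{(k+j)!(k-j)!},
\]
so that the claim reads $S_s(k) = \sum_{j_1=1}^{k}\sum_{j_2=1}^{j_1}\cdots\sum_{j_s=1}^{j_{s-1}}(j_1 j_2\cdots j_s)^2$. For the base case $s=0$, one must show $S_0(k) = \sum_{j=-k}^{k}(-1)^{j+k}\frac{j^{2k}}{(k+j)!(k-j)!}=1$; after the substitution $i = k+j$ this is (up to the factor $(2k)!$) the statement that the $2k$-th finite difference of the polynomial $(i-k)^{2k}$ equals $(2k)!$, which is standard since $(i-k)^{2k}$ is monic of degree $2k$. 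Equivalently, $\frac{1}{(2k)!}\sum_{i=0}^{2k}(-1)^{i}\binom{2k}{i}(i-k)^{2k}=1$ because the leading coefficient of $(i-k)^{2k}$ in $i$ is $1$ and all lower-degree terms are annihilated by the $2k$-fold difference operator.

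For the inductive step, I would write $j^{2k+2s} = j^{2}\cdot j^{2(k+(s-1))+2}\cdot\ldots$ — more precisely, the idea is to peel off one factor using a telescoping/summation-by-parts identity in the parameter $k$. Concretely, I expect to use the elementary identity
\[
 \frac{1}{(k+j)!(k-j)!} = \frac{1}{(2k)!}\binom{2k}{k+j},
\]
so that $S_s(k) = \frac{1}{(2k)!}\sum_{j=-k}^{k}(-1)^{j+k}\binom{2k}{k+j} j^{2k+2s}$, and then relate the weighted power sum $\sum_{j}(-1)^{j+k}\binom{2k}{k+j}j^{2k+2s}$ to the corresponding sum for $k-1$. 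The key algebraic fact is a Pascal-type recursion combined with the observation that $j^{2}\binom{2k}{k+j}$ can be rewritten so that applying the $2k$-fold difference operator produces the $2(k-1)$-fold difference operator acting on a shifted argument; this is where the extra factor $j^2$ gets converted into the innermost sum $\sum_{j_s=1}^{j_{s-1}} j_s^{2}$. I would make this precise by introducing the operator $(\Delta^{2k}p)(0)$ for polynomials $p$ and using the factorization $\Delta^{2k} = \Delta^{2}\Delta^{2k-2}$ together with the discrete analogue of integration by parts, $\sum_x (\Delta f)(x) g(x) = -\sum_x f(x+1)(\Delta g)(x) + \text{boundary}$, iterated twice to turn $\Delta^2$ acting on $j^{2k+2s}$ into a sum over the innermost index.

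The main obstacle I anticipate is getting the bookkeeping of the nested sums exactly right: one must verify that peeling off one power of $j^2$ from $j^{2k+2s}$ indeed lowers $k$ to $k-1$ and $s$ to $s-1$ \emph{simultaneously} with the correct range $1\le j_s\le j_{s-1}$ on the new innermost index, rather than, say, $0\le j_s\le k-1$. This is delicate because the exponent $2k+2s$ couples both parameters, so the inductive hypothesis must be applied with care — most likely one proves a slightly more general statement where the top exponent is treated as an independent parameter, or one runs the induction on $s$ while keeping track of an auxiliary summation variable. An alternative route, which might avoid the summation-by-parts bookkeeping, is to compute both sides as the value at a suitable point of an explicit generating function: the left side is the coefficient extraction $(2k)!^{-1}[\text{const. term}]$ of $(e^{x}-1)^{?}$-type expansions, i.e.\ it equals $\big(\frac{d}{dx}\big)^{2k+2s}\big(2\sinh(x/2)\big)^{2k}\big|_{x=0}$ up to normalization, and the right side is recognizable as a complete homogeneous-type symmetric-function evaluation; matching the two via known expansions of powers of $\sinh$ would give a cleaner, if less self-contained, proof. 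I would present the inductive argument as the primary proof and relegate the generating-function viewpoint to a remark.
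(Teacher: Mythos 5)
Your base case is fine: after the shift $i=k+j$ the $s=0$ sum is $\tfrac{1}{(2k)!}$ times the $2k$-th finite difference of the monic degree-$2k$ polynomial $(i-k)^{2k}$, hence equals $1$. The genuine gap is the inductive step, which is the whole content of the lemma and which you describe only as a hope ("I expect to use\dots", "most likely one proves a slightly more general statement\dots"). To close an induction on $s$ you would need a precise recursion matching the one satisfied by the right-hand side, which is the complete homogeneous symmetric polynomial $A_{s,k}=h_s(1^2,\dots,k^2)$ and therefore obeys the two-parameter relation $A_{s,k}=A_{s,k-1}+k^2A_{s-1,k}$; your peeling heuristic, which tries to convert a factor $j^2$ into the \emph{innermost} sum $\sum_{j_s=1}^{j_{s-1}}j_s^2$ while lowering $k$ and $s$ simultaneously, does not match this structure (the natural recursion peels the \emph{outermost} index), and no identity is actually verified. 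As written, the primary argument is a plan with an acknowledged unresolved step, not a proof.

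Your "alternative route" is, in essence, the paper's actual proof, and the key observation there is correct: the left-hand side equals $\bigl(\tfrac{d}{dt}\bigr)^{2k+2s}\bigl[(2\sinh(t/2))^{2k}/(2k)!\bigr]\big|_{t=0}$, since $(e^{t/2}-e^{-t/2})^{2k}/(2k)!=\sum_{j=-k}^{k}\tfrac{(-1)^{k+j}}{(k+j)!(k-j)!}e^{jt}$. But "matching the two via known expansions of powers of $\sinh$" hides the one nontrivial bridge: the left side is encoded by an \emph{exponential} generating function in $t$, while the right side has the \emph{ordinary} generating function $\prod_{j=1}^{k}(1-j^2u)^{-1}$ of $h_s(1^2,\dots,k^2)$. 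The paper converts one into the other by taking the Laplace transform $\int_0^\infty e^{-pt}\sinh^{2k}(at)\,{\rm d}t=\tfrac{(2k)!}{p}\prod_{j=1}^{k}\tfrac{a^2}{p^2-(2aj)^2}$ (together with a positivity argument justifying the interchange of sum and integral, and a Stirling-number computation identifying the Taylor coefficients with the alternating sum). Without supplying this step, or an equivalent partial-fraction argument, the generating-function route also remains a sketch.
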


It follows from the positivity of the coefficients in~\eqref{eq.rho.k.asymp} that
\[
\rho^{(k)}_n > \frac{\left((2k)!\right)^{2}}{16^{k}\left(k!\right)^{2}}\frac{1}{n^{2k}},
\]
for all $n\geq k\geq1$. Hence, inequality~\eqref{eq:ineq_higher_order_conj} would improve upon the discrete analogue of~\eqref{eq:higher-order-hardy-cont}.

At the end, we would like to mention a related open problem. The boundary conditions imposed on $u$ by requiring $u\in H_0^1(\N_{0})$ in the Hardy case or $u\in H_0^2(\N_{0})$ in the Rellich case seem natural from the point of view of the corresponding continuous inequalities. On the other hand, there is no actual need for these conditions in the discrete setting. After the identification of $H_0^1(\N_{0})=\{u\in\ell^{2}(\N_{0}) \mid u_{0}=0\}$ with $\ell^{2}(\N)$, the Hardy inequality~\eqref{eq:hardy_ineq_opt} can be rephrased as the operator inequality 
\[
 -\Delta\geq\rho^{(1)}
\]
on the space $\ell^{2}(\N)$, where the discrete Dirichlet Laplacian coincides with the bounded positive operator on $\ell^{2}(\N)$ given by the tri-diagonal Toeplitz matrix
\[
-\Delta=\begin{pmatrix}
2 & -1 \\
-1 & 2 & -1\\
& -1 & 2 & -1\\
& & \ddots & \ddots & \ddots
\end{pmatrix}.
\]

Similarly, one can consider the square of $-\Delta$ on the whole space $\ell^{2}(\N)$ which is a positive bounded operator with the penta-diagonal matrix representation 
\[
(-\Delta)^{2}=\begin{pmatrix}
5 & -4 & 1\\
-4 & 6 & -4 & 1 \\
1 & -4 & 6 & -4 & 1 \\
& 1 & -4 & 6 & -4 & 1 \\
& & \ddots & \ddots & \ddots & \ddots & \ddots
\end{pmatrix}.
\]
The following interesting open problems concern the discrete analogue of the Rellich inequality on $\ell^{2}(\N)$:
\begin{enumerate}
\item Is there $c>0$ such that the inequality   
\[
(-\Delta)^{2}\geq\rho(c),
\]
where $\rho_{n}(c):=c/n^{4}$, for $n\in\N$, holds on $\ell^{2}(\N)$ (in the form sense)?
\item What is the best constant $c$, i.e.~what is 
\[
 \sup\left\{c>0 \mid (-\Delta)^{2}\geq\rho(c)\right\}?
\]
\item Is there an improved or even optimal weight $\rho$ such that $(-\Delta)^{2}\geq\rho$ on $\ell^{2}(\N)$?
\end{enumerate}

%As before, we avoid confusion by omitting the irrelevant first $k-1$ entries of the weight and consider $\rho_k = \{\rho_{k,n}\}_{n\ge k}$. If $k = 2l$ is even with $l \in \N$, inequality~\eqref{eq.conj.form} reads
%\begin{equation}
%	\sum_{n=1}^\infty |((-\Delta)^l u)_n|^2 \ge \sum_{n=k}^\infty \rho_{k,n} |u_n|^2
%\end{equation}
%and if $k = 2l + 1$ is uneven with $l \in \N_0$, it can be written as
%\begin{equation}
%	\sum_{n=1}^\infty |(D(-\Delta)^{l} u)_n|^2 \ge \sum_{n=k}^\infty \rho_{k,n} |u_n|^2.
%\end{equation}

\appendix
\addcontentsline{toc}{section}{Appendices}
\section*{Appendices}

\setcounter{section}{1}
\subsection{Proof of Lemma~\ref{lem.xi.bounds}}\label{subsec:a.1}

For simplicity, we use the following notation 
\[
g_{n}:=g_{n}^{(2)}=n^{3/2} \quad\mbox{ and }\quad h_n := \frac{g_{n+1}}{g_{n}}=\left(1+\frac{1}{n}\right)^{3/2}, \quad n \in \N,
\]
throughout this appendix.
To establish Lemma~\ref{lem.xi.bounds}, we verify that the sequence~$\zeta$ determined recursively by
\begin{equation}
\zeta_{n}=h_{n}\left(4-h_{n+1}-\frac{1}{h_{n-1}}-\frac{h_{n}}{\zeta_{n-1}}\right), \quad n\geq 2,
\label{eq.xi.recur}
\end{equation}
and the initial condition $\zeta_{1} = 8\sqrt{2} - 3\sqrt{3}$, is bounded by the inequalities 
\begin{equation}
 h_{n}h_{n+1} < \zeta_{n} < h_n h_{n+1} h_{n+2},
\label{eq:xi-bounds-upper-lower}
\end{equation}
for all $n\in\N$. 

1) We prove the lower bound from~\eqref{eq:xi-bounds-upper-lower} by induction. 
   The initial hypothesis holds as one can directly compute
\begin{equation}
		\zeta_{1} = 8\sqrt{2} - 3\sqrt{3} \approx 6.1 > 5.2 \approx 3 \sqrt{3} = h_{1}h_{2}.
\end{equation}
For the induction step, suppose that $n \geq 2$ and $\zeta_{n-1} > h_{n-1}h_{n}$. In view of~\eqref{eq.xi.recur}, we then have
\[
 \zeta_{n}>h_{n}\left(4-h_{n+1}-\frac{1}{h_{n-1}}-\frac{h_{n}}{h_{n-1}h_{n}}\right)=h_{n}\left(4-h_{n+1}-\frac{2}{h_{n-1}}\right).
\]
It follows that $\zeta_{n}>h_{n}h_{n+1}$, if
\begin{equation}
		2-h_{n+1}-\frac{1}{h_{n-1}}=2-\left(\frac{n+2}{n+1}\right)^{3/2}-\left(\frac{n-1}{n}\right)^{3/2}>0.
\end{equation}
	
To establish the last inequality, we show that the function 
\begin{equation}
		f(x):= \left(\frac{x+2}{x+1}\right)^{3/2}+\left(\frac{x-1}{x}\right)^{3/2}
\end{equation}
is strictly increasing on the interval $(2,\infty)$. The claimed inequality then follows from that fact that $\lim_{x\to\infty}f(x)=2$ and the continuity of $f$. We readily compute that
	\begin{equation}\label{eq.f.der}
		f'(x)=\frac{3}{2}\left(\sqrt{\frac{x-1}{x^{5}}}-\sqrt{\frac{x+2}{(x+1)^{5}}} \right).
\end{equation}
Thus, $f'(x)>0$, if and only if
\[
 (x-1)(x+1)^{5}-(x+2)x^{5}>0.
\]
The polynomial on the left-hand side can be factorized into three terms over the reals and the resulting inequality reads
\[
 (2x+1)\left(x^{2}+x+\sqrt{2}-1\right)\left(x^{2}+x-\sqrt{2}-1\right)>0.
\]
Each factor is obviously positive for $x>2$.
	
2) We prove the upper bound in~\eqref{eq:xi-bounds-upper-lower}. The proof proceeds again by induction. Since
\begin{equation}
		\zeta_{1}=8\sqrt{2}-3\sqrt{3}\approx 6.1<8=h_{1}h_{2}h_{3},
\end{equation}
it remains to check the induction step. Before starting we note that, for all $n\in\N$, one has
\begin{equation}\label{eq.g.conv}
		g_{n}<\frac{g_{n-1}+g_{n+1}}{2}\quad\mbox{ and }\quad g_{n+1}<\frac{g_{n-1}+g_{n+3}}{2}.
\end{equation}
This follows from the strict convexity of the function $x\mapsto x^{3/2}$ on $[0,\infty)$.	
	
	Suppose that $n\geq 2$ and $\zeta_{n-1}<h_{n-1}h_{n}h_{n+1}$. Similarly as in the proof of the lower bound, using~\eqref{eq.xi.recur} together with the induction hypothesis, we obtain
\[
\zeta_{n}<h_{n}\left(4-h_{n}-\frac{1}{h_{n-1}}-\frac{1}{h_{n-1}h_{n+1}}\right).
\]
To verify the induction step, it suffices to show that
	\begin{equation}
		4-h_{n}-\frac{1}{h_{n-1}}-\frac{1}{h_{n-1}h_{n+1}}<h_{n+1}h_{n+2}.
	\end{equation}
	Recalling that $h_{n}=g_{n+1}/g_{n}$, this is equivalent to the inequality
	\begin{equation}
		4g_{n}g_{n+1}g_{n+2}-g_{n+1}^{2}g_{n+2}-g_{n-1}g_{n+1}g_{n+2}-g_{n-1}g_{n+1}^{2}-g_{n}g_{n+2}g_{n+3}<0.
	\end{equation}
	This, however, can be proved by rewriting and estimating the expression on the left-hand side as follows 
	\begin{equation}
		\begin{aligned}
			& g_{n+1}g_{n+2}(2g_{n}-g_{n+1}-g_{n-1}) + 2g_{n}g_{n+1}g_{n+2}-g_{n-1}g_{n+1}^{2}-g_{n}g_{n+2}g_{n+3} \\
			& < g_{n+1}g_{n+2}(2g_{n}-g_{n+1}-g_{n-1})+ g_{n}g_{n+2}\left(2g_{n+1}-g_{n-1}-g_{n+3}\right) < 0.
		\end{aligned}
	\end{equation}
	Here the first inequality follows from the fact that $g_{n+1}^{2}>g_{n}g_{n+2}$, for all $n \in \N$, and the second from~\eqref{eq.g.conv}.

\subsection{Proof of Lemma \ref{lem:id_amaz}}\label{subsec:a.2}

For $s,k\in\N$, we define three numbers
$$
\begin{aligned}
A_{s,k}&:=\sum_{j_1=1}^k \sum_{j_2=1}^{j_1} \cdots \sum_{j_s=1}^{j_{s-1}} (j_1j_2 \cdots j_s)^2,
\\
B_{s,k}&:=\sum_{j=-k}^{k}\frac{(-1)^{j+k}j^{2k+2s}}{(k+j)!(k-j)!},
\\
C_{s,k}&:=\sum_{m=0}^{2s}(-1)^{m}\binom{2k+2s}{m}S(2k+2s-m,2k)k^{m},
\end{aligned}  
$$
where $S(n,m)$ are the Stirling numbers of the second kind; see~\cite[\S~26.8]{dlmf}. By convention, we also define the three quantities for vanishing indices as follows:
\[
 X_{s,k}:=\begin{cases}
  1, & \quad \mbox{ if }\; s=0, k\geq1, \\
  0, & \quad \mbox{ if }\; s\geq0, k=0,
 \end{cases}
\]
for all $X\in\{A,B,C\}$. Notice that this convention is consistent with the above formulas whenever they are well defined, in particular with $B_{0,k}$ for $k\ge1$, cf.~\eqref{eq.Stirling} below. Lemma~\ref{lem:id_amaz} is nothing but the equality $A_{s,k}=B_{s,k}$. We establish the latter by first showing that $A_{s,k}=C_{s,k}$ and then $B_{s,k}=C_{s,k}$, for all $k,s\in\N_{0}$. For the proof, it suffices to consider $k\ge1$.

The verification of identity $A_{s,k}=C_{s,k}$ is based on generating function formulas.
It is easy to check that, for $k\in\N$, one has
\begin{equation}
 \sum_{s=0}^{\infty}A_{s,k}t^{s}=\prod_{j=1}^{k}\frac{1}{1-j^{2}t}
\label{eq:gener_func_A}
\end{equation}
for $|t|<1/k^{2}$. In fact, $A_{s,k}$ can be identified with the complete homogeneous symmetric polynomial
\[
 h_{s}(x_{1},\dots,x_{k}):=\sum_{1\leq i_{1}\leq\dots\leq i_{s}\leq k}x_{i_1}\dots x_{i_s},
\]
for $x_{i}=i^{2}$. Then~\eqref{eq:gener_func_A} follows from the well known generating function formula
\[
 \sum_{s=0}^{\infty}h_{s}(x_{1},\dots,x_{k})t^{s}=\prod_{j=1}^{k}\frac{1}{1-x_{j}t},
\]
see~\cite[Chap.~7]{Stanley-1999}. Indeed, the above series converges and is equal to the product on the right hand side if $|x_jt|<1$ for all $1\le j\le k$. Formula~\eqref{eq:gener_func_A} implies
\begin{equation}
 \sum_{s=0}^{\infty}A_{s,k}t^{2s+2k}=\prod_{j=1}^{k}\frac{t^{2}}{1-j^{2}t^{2}},
\label{eq:gener_func_A_modif}
\end{equation}
for all $k\in\N$ and $|t|<1/k$.
We prove the same formula holds for $C_{s,k}$, too. As a~consequence, $A_{s,k}=C_{s,k}$ for all $s,k\in\N_{0}$.

\begin{lemma}
For $k\in\N$ and $|t|<1/k$, one has
\[
 \sum_{s=0}^{\infty}C_{s,k}t^{2s+2k}=\prod_{j=1}^{k}\frac{t^{2}}{1-j^{2}t^{2}}.
\]
\end{lemma}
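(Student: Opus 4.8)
The plan is to compute the ordinary generating function of $\{C_{s,k}\}_{s\ge0}$ and recognise it as $\prod_{j=1}^{k}\frac{t^{2}}{1-j^{2}t^{2}}$. Put $N:=2k+2s$. Since $S(N-m,2k)=0$ whenever $m>2s$ (because then $N-m<2k$), the summation range in the definition of $C_{s,k}$ may be extended to $m=N$ without changing its value, so $C_{s,k}=C^{*}_{N}$ with
\[
 C^{*}_{N}:=\sum_{m=0}^{N}(-1)^{m}\binom{N}{m}S(N-m,2k)\,k^{m},\qquad N\in\N_{0},
\]
and moreover $C^{*}_{N}=0$ for $N<2k$. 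As the target product contains only the powers $t^{2k+2s}$, it suffices to prove $\sum_{N\ge0}C^{*}_{N}x^{N}=\prod_{j=1}^{k}\frac{x^{2}}{1-j^{2}x^{2}}$; the lemma then follows at once upon renaming $x=t$, since $C^{*}_{2k+2s}=C_{s,k}$ and all remaining coefficients vanish, while both series converge for $|t|<1/k$ (the nearest pole of the product lying at $|t|=1/k$).

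I would first recall the classical ordinary generating function $\sum_{n\ge0}S(n,2k)x^{n}=\frac{x^{2k}}{\prod_{i=1}^{2k}(1-ix)}$; see~\cite{dlmf}. Next I would invoke the elementary identity that, for any $F(x)=\sum_{n\ge0}a_{n}x^{n}$ and any scalar $c$,
\[
 \sum_{N\ge0}\Bigl(\sum_{m=0}^{N}\binom{N}{m}a_{N-m}c^{m}\Bigr)x^{N}=\frac{1}{1-cx}\,F\!\Bigl(\frac{x}{1-cx}\Bigr),
\]
which follows, after the substitution $n=N-m$, from the negative binomial series $\sum_{m\ge0}\binom{n+m}{m}y^{m}=(1-y)^{-(n+1)}$. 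Applying it with $a_{n}=S(n,2k)$ and $c=-k$ (so that $(-1)^{m}k^{m}=c^{m}$) yields
\[
 \sum_{N\ge0}C^{*}_{N}x^{N}=\frac{1}{1+kx}\,\Phi\!\Bigl(\frac{x}{1+kx}\Bigr),\qquad \Phi(y):=\frac{y^{2k}}{\prod_{i=1}^{2k}(1-iy)}.
\]

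The final step is to simplify the right-hand side. Using $1-i\frac{x}{1+kx}=\frac{1+(k-i)x}{1+kx}$ for $1\le i\le 2k$ one obtains $\Phi\bigl(\frac{x}{1+kx}\bigr)=\frac{x^{2k}}{\prod_{i=1}^{2k}(1+(k-i)x)}$, in which the factor for $i=k$ equals $1$. Since $\{k-i:1\le i\le 2k,\ i\ne k\}=\{-k,\dots,-1,1,\dots,k-1\}$, multiplying by the leftover factor $\frac{1}{1+kx}$ and reindexing the product gives
\[
 \sum_{N\ge0}C^{*}_{N}x^{N}=\frac{x^{2k}}{\prod_{m=-k}^{k}(1+mx)}=\frac{x^{2k}}{\prod_{m=1}^{k}(1+mx)(1-mx)}=\prod_{m=1}^{k}\frac{x^{2}}{1-m^{2}x^{2}},
\]
which is the required identity; together with $\sum_{s}A_{s,k}t^{2s+2k}$ this also delivers $A_{s,k}=C_{s,k}$.

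I expect the only part needing care to be the bookkeeping in this last step: verifying that the index $i=k$ contributes exactly the trivial factor $1$, and that the spare factor $1+kx$ is precisely what is needed to complete the symmetric product $\prod_{m=-k}^{k}(1+mx)$ (whose $m=0$ factor is $1$). I would also note briefly that all the generating-function manipulations are legitimate in $\C[[x]]$, since $\frac{x}{1+kx}$ has vanishing constant term, and hence the identity holds as well for analytic functions on $|x|<1/k$.
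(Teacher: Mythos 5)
Your proof is correct, but it follows a genuinely different route from the paper's. You work entirely with \emph{ordinary} generating functions: after harmlessly extending the summation range in $C_{s,k}$ (the extra Stirling numbers vanish), you recognise the sum as a binomial transform of $S(n,2k)$, apply the substitution rule
\begin{equation}
 \sum_{N\ge0}\Bigl(\sum_{m=0}^{N}\tbinom{N}{m}a_{N-m}c^{m}\Bigr)x^{N}=\frac{1}{1-cx}\,F\Bigl(\frac{x}{1-cx}\Bigr)
\end{equation}
to the classical identity $\sum_{n\ge0}S(n,2k)x^{n}=x^{2k}/\prod_{i=1}^{2k}(1-ix)$, and simplify the resulting rational function; the bookkeeping (the factor $i=k$ contributing $1$, the spare $1+kx$ completing the symmetric product) checks out, and the formal identity in $\C[[x]]$ upgrades to the analytic statement on $|t|<1/k$ as you indicate. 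The paper instead starts from the \emph{exponential} generating function $(e^{t}-1)^{k}/k!=\sum_{j}S(j,k)t^{j}/j!$, multiplies by $e^{-kt}$ to reach the closed form $\frac{1}{(2k)!}(e^{t/2}-e^{-t/2})^{2k}=\sum_{s}C_{s,k}\,t^{2s+2k}/(2s+2k)!$, and then converts this to an ordinary generating function via the Laplace transform of $\sinh^{2k}$; that route requires first proving $C_{s,k}\ge0$ (read off from the derivatives of $e^{t/2}-e^{-t/2}$ at $0$) to justify interchanging sum and integral. Your argument is purely formal-algebraic and avoids both the analytic detour and the positivity lemma; the paper's approach buys the pleasant intermediate $\sinh^{2k}$ identity and the non-negativity of $C_{s,k}$ as a by-product. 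One small presentational point: the vanishing of the odd-index coefficients $C^{*}_{N}$ for $N>2k$ is a \emph{consequence} of your final product formula (whose only nonzero coefficients sit at even degrees $\ge 2k$), not an input to it, so you should state that claim after, not before, establishing the identity for $\sum_{N}C^{*}_{N}x^{N}$.
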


\begin{proof}
It follows from the exponential generating function formula~\cite[Eq.~26.8.12]{dlmf}
\[
\frac{(e^{t}-1)^{k}}{k!}=\sum_{j=0}^{\infty}S(j,k)\frac{t^{j}}{j!},
\]
where the series converges for $k\in\N$ and $t\in\C$, that 
\[
\frac{1}{(2k)!}\left(\frac{e^{t}-1}{t}\right)^{2k}=\sum_{j=0}^{\infty}S(2k+j,2k)\frac{t^{j}}{(2k+j)!},
\]
for all $k\in\N$ and $t\in\C$. Multiplying this function by 
\[
 e^{-kt}=\sum_{m=0}^{\infty}\frac{(-kt)^{m}}{m!},
\]
one obtains
\[
\frac{1}{(2k)!}\left(\frac{e^{t/2}-e^{-t/2}}{t}\right)^{2k}=\sum_{s=0}^{\infty}\left(\sum_{m=0}^{s}(-1)^{m}\frac{S(2k+s-m,2k)}{(2k+s-m)!m!}k^{m}\right)t^{s}.
\]
Taking also into account that the function on the left-hand side is even for all $k\in\N$, we deduce the formula
\begin{equation}
\frac{1}{(2k)!}\left(e^{t/2}-e^{-t/2}\right)^{2k}=\sum_{s=0}^{\infty}C_{s,k}\frac{t^{2s+2k}}{(2s+2k)!},
\label{eq:C_exp_gen_func}
\end{equation}
for all $k\in\N$ and $t\in\C$.

It is clear that the function $f(t):=e^{t/2}-e^{-t/2}$ fulfills $f^{(l)}(0)\geq0$ for all $l\in\N_{0}$. 
By the Leibniz rule the same holds true for any non-negative integer power of $f$. Thus, we see from~\eqref{eq:C_exp_gen_func} that $C_{s,k}\geq0$ for all $s\in\N_{0}$ and $k\in\N$. Next, we multiply both sides of~\eqref{eq:C_exp_gen_func} by $\exp(-pt)$, where $p>0$, 
and integrate with respect to~$t$ from $0$ to $\infty$. The non-negativity of $C_{s,k}$ justifies the interchange of the sum and the integral. We arrive at the equation
\[
\sum_{s=0}^{\infty}C_{s,k}p^{-2s-2k-1}=\frac{1}{(2k)!}\int_{0}^{\infty}e^{-pt}\left(e^{t/2}-e^{-t/2}\right)^{2k}\dd t.
\]
Finally, with the aid of the Laplace transform identity~\cite[Eq.~2.3.1.2]{Prudnikov-etal-vol4}
\[
 \int_{0}^{\infty}e^{-pt}\sinh^{2k}(at)\dd t=\frac{(2k)!}{p}\prod_{j=1}^{k}\frac{a^{2}}{p^{2}-(2aj)^{2}},
\]
which holds true if $p>2ka$, we deduce that 
\[
\sum_{s=0}^{\infty}C_{s,k}p^{-2s-2k}=\prod_{j=1}^{k}\frac{1}{p^{2}-j^{2}},
\]
for $p>k$. The statement readily follows.
\end{proof}

It remains to prove the identity $B_{s,k}=C_{s,k}$ for $s\ge0$ and $k\ge1$. This can be done directly as follows. Using the binomial formula, we get
\begin{align*}
 B_{s,k}&=\sum_{j=0}^{2k}\frac{(-1)^{j}}{(2k-j)!j!}(k-j)^{2k+2s}\\
 &=\sum_{j=0}^{2k}\frac{(-1)^{j}}{(2k-j)!j!}\sum_{m=0}^{2k+2s}\binom{2k+2s}{m}k^{2k+2s-m}(-j)^{m}\\
 &=\sum_{m=0}^{2k+2s}(-1)^{m}\binom{2k+2s}{m}k^{2k+2s-m}\sum_{j=0}^{2k}(-1)^{j}\frac{j^{m}}{(2k-j)!j!},
\end{align*}
for $k\in\N$ and $s\in\N_{0}$. Applying the identity~\cite[Eq.~26.8.6]{dlmf}
\begin{equation}\label{eq.Stirling}
 S(n,k)=\sum_{j=0}^{k}(-1)^{k+j}\frac{j^{n}}{(k-j)!j!}
\end{equation}
and recalling the convention $S(n,k)=0$, if $n<k$, we obtain
\begin{align*}
 B_{s,k}&=\sum_{m=2k}^{2k+2s}(-1)^{m}\binom{2k+2s}{m}S(m,2k)k^{2k+2s-m}\\
 &=\sum_{m=0}^{2s}(-1)^{m}\binom{2k+2s}{m}S(2k+2s-m,2k)k^{m}.
\end{align*}
This implies that $B_{s,k}=C_{s,k}$, for all $s,k\in\N_{0}$.

\subsection*{Acknowledgment}
The authors acknowledge the support of the EXPRO grant No.~20-17749X
of the Czech Science Foundation.

{\footnotesize
	\bibliographystyle{acm}
	%\bibliography{C:/Users/Borbala/Nextcloud/01_Borbala/meta/ref}
	\bibliography{ref_09}
}

\end{document}